\newtheoremstyle{plainNoItalics}{}{}{\normalfont}{}{\bfseries}{.}{ }{}
\theoremstyle{plain}
\newtheorem{thm}{Theorem}[section]
\theoremstyle{plainNoItalics}
\newtheorem{defn}[thm]{Definition}
\newtheorem{rem}[thm]{Remark}
\newtheorem{prop}[thm]{Proposition}
\newcommand{\ra}{\rightarrow}
\newcommand{\beq}{\begin{equation}}
\newcommand{\eeq}{\end{equation}}
\newcommand{\beqa}{\begin{eqnarray}}
\newcommand{\eeqa}{\end{eqnarray}}
\newcommand{\bit}{\begin{itemize}}
\newcommand{\eit}{\end{itemize}}
\newcommand{\bedef}{\begin{defn}}
\newcommand{\edefn}{\end{defn}}
\newcommand{\bpro}{\begin{prop}}
\newcommand{\epro}{\end{prop}}
\newcommand{\Dx}{\Delta x}
\newcommand{\Dy}{\Delta y}
\newcommand{\Dt}{\Delta t}
\newcommand{\mO}{{\mathcal O}}
\newcommand{\mI}{{\mathbb  I}}
\newcommand{\eps}{\varepsilon}
\newcommand{\be}{{\bf e}}
\newcommand{\bx}{{\bf x}}
\newcommand{\bu}{{\bf u}}
\newcommand{\bq}{{\bf q}}
\newcommand{\bV}{{\bf V}}
\title[High order AP well-balanced WENO schemes for full Euler with gravity]{High order asymptotic preserving well-balanced finite difference WENO schemes for all Mach full Euler equations with gravity}
\keywords{compressible Euler equations; all Mach numbers; gravity; finite difference WENO; high order method; asymptotic preserving; well-balanced.}
\begin{document}

\maketitle

\centerline{\scshape Guanlan Huang}
\medskip
{\footnotesize
\centerline{School of Mathematical Sciences, Xiamen University, Xiamen, Fujian, 361005, PR China }
\centerline{glhuang@stu.xmu.edu.cn}
}
	
\medskip

\centerline{\scshape Yulong Xing}
\medskip
{\footnotesize
	\centerline{Department of Mathematics, The Ohio State University, Columbus, OH 43210, USA  }
	\centerline{xing.205@osu.edu}
}

\medskip

\centerline{\scshape Tao Xiong\footnote{Corresponding author.}}
\medskip
{\footnotesize
	\centerline{School of Mathematical Sciences, Fujian Provincial Key Laboratory of Mathematical Modeling} 
	\centerline{and High-Performance Scientific Computing, Xiamen University}
	\centerline{Xiamen, Fujian 361005, PR China}
	\centerline{txiong@xmu.edu.cn}
}

\bigskip
	
	\begin{abstract}
		In this paper, we propose a high order semi-implicit well-balanced finite difference scheme for all Mach Euler equations with a gravitational source term. To obtain the asymptotic preserving property, we start from the conservative form of full compressible Euler equations 
		and add the evolution equation of the perturbation of potential temperature. 
		The resulting system is then split into a (non-stiff) nonlinear low dynamic material wave to be treated explicitly, and (stiff) fast acoustic and gravity waves to be treated implicitly. With the aid of explicit time evolution for the perturbation of potential temperature, we design a novel well-balanced finite difference WENO scheme for the conservative variables, which can be proven to be both asymptotic preserving and asymptotically accurate in the incompressible limit. Extensive numerical experiments were provided to validate these properties.
	\end{abstract}

	
	
	\section{Introduction}
	\label{sec1}
	
	\setcounter{equation}{0}
	\setcounter{figure}{0}
	\setcounter{table}{0}
	
	In this paper, we are interested in the compressible Euler equations with a gravitational source
	\begin{equation}
		\label{FEe1}
		\left\{
		\begin{array}{ll}
			\rho_{t} + \nabla \cdot(\rho \bu) =0, \\ [1mm]
			(\rho \bu)_t+\nabla \cdot(\rho \bu \otimes \bu)+\nabla p =-\rho \nabla \Phi, \\[1mm]
			E_{t}+\nabla \cdot\left( (E+p)\bu\right) = -\rho \bu\cdot\nabla \Phi,
		\end{array}\right.
	\end{equation}
	where $\rho$ is the density, $\bu$ is the velocity, $p$ is the pressure, and $\rho\bu$ is the momentum.
	$\Phi=\Phi(\bx)$ is the gravitational potential which is assumed to be time-independent. $E=\frac{1}{2}\rho|\bu|^2 + \rho e$ is the total non-gravitational energy, with $e$ being the specific internal energy. The system \eqref{FEe1} needs to be closed by providing an equation of state (EOS), which is usually given in the form $e=\mathcal{E}(\rho,p)$. For an ideal gas, the EOS can be written as $e = p/(\gamma -1)/\rho$,
	and the energy $E$ becomes
	\begin{equation}
		\label{E2}
		E= \frac{1}{2}\rho|\bu|^2  + \frac{p}{\gamma-1},
	\end{equation}
	with $\gamma > 1$ being the ratio of specific heat.
	
	On one hand, for hyperbolic systems with a source term, one important feature is that they admit equilibrium state solutions, and well-balanced numerical methods are desirable to exactly preserve such equilibrium states, so that small perturbations around an equilibrium state can be well captured on relatively coarse mesh sizes. In recent years, well-balanced schemes are very attractive for shallow water equations with source terms, see \cite{bermudez1994upwind,kurganov2002solution,audusse2004fast,xing2005high,xing2006high,xing2006new,noelle2006well}, the review papers \cite{X2017,Kurganov18} and the references therein. The compressible Euler equations with a gravitational source \eqref{FEe1} have a zero-velocity hydrostatic equilibrium state \cite{xing2013high} of the form
	\begin{equation}
		\label{equistate}
		\bu = {\bf 0}, \qquad \nabla p = -\rho\nabla \Phi.
	\end{equation}
	Numerically, similar to the shallow water equations, it is essential to develop well-balanced schemes for the Euler equations \eqref{FEe1} to exactly preserve such an equilibrium state, especially for long-time simulations. 
	Many well-balanced schemes have been studied, including finite difference schemes \cite{xing2013high,li2018well}, finite volume schemes \cite{kappeli2014well,Chandrashekar2015,berberich2016general,
		desveaux2016well,li2016high,touma2016well,bermudez2017finite,berberich2019second,grosheintz2019high,klingenberg2019arbitrary}, and discontinuous Galerkin finite element methods \cite{wu2021uniformly,chandrashekar2017well,li2018well2}. These schemes are usually based on explicit time discretizations for compressible flows. 
	
	On the other hand, the Euler equations \eqref{FEe1} have many applications with a wide range of Mach numbers. We can rewrite \eqref{FEe1} into a dimensionless form, by introducing a set of dimensionless variables with some suitable reference values
	\begin{equation}
		\label{DV1}
		\hat{{\bx}} = \frac{{\bx}}{\ell_{ref}},  \,
		\hat{t}     = \frac{t}{t_{ref}},         \,
		\hat{\rho}  = \frac{\rho}{\rho_{ref}},   \,
		\hat{\bu}   = \frac{\bu}{U_{ref}},       \,
		\hat{p}     = \frac{p}{p_{ref}},         \,
		\hat{E}     = \frac{E}{p_{ref}},         \,
		\hat{\Phi}  = \frac{\Phi}{\Phi_{ref}},
	\end{equation}
	where $U_{ref}=\ell_{ref}/t_{ref}$. Under these dimensionless variables, the system \eqref{FEe1} becomes \cite{thomann2020all,birke2021low,benacchio2014blended}
	\begin{subequations}
		\label{FEe2}
		\begin{align}
			& \rho_{t} + \nabla \cdot(\rho \bu) =0, \label{FEe2a} \\
			& (\rho \bu)_t+\nabla \cdot(\rho \bu \otimes \bu)+\frac{1}{\eps^2}\nabla p =-\frac{1}{\eps^2}\rho \nabla\Phi,  \label{FEe2b} \\
			& E_{t}+\nabla \cdot\left( (E+p)\bu\right) = -\rho \bu\cdot\nabla \Phi. \label{FEe2c}
		\end{align}
	\end{subequations}
	Here we drop the hats for those dimensionless variables for clarity. The parameter $\eps = {U_{ref}}/{c_{ref}}$ is a referenced global Mach number, with  $c_{ref} = \sqrt{ {p_{ref}} / {\rho_{ref}}}$ being the acoustic velocity depending on the background flow. The EOS \eqref{E2} becomes
	\begin{equation}
		\label{EOS2}
		E = \frac{1}{2}\eps^2\rho|\bu|^2  + \frac{p}{\gamma-1}.
	\end{equation}
	The dimensionless system \eqref{FEe2} with the EOS \eqref{EOS2} is still hyperbolic, and its eigenvalues along the normal direction ${\bf{n}}$ are 
	\begin{equation}
		\label{eigv}
		\lambda_1 = \bu\cdot{\bf{n}} - c/\eps, \quad \lambda_2 = \bu\cdot{\bf{n}}, \quad \lambda_3 = \bu\cdot{\bf{n}} + c/\eps, 
	\end{equation}
	where $c=\sqrt{\gamma p / \rho}$ is the scaled speed of sound. The model \eqref{FEe2}-\eqref{EOS2} with the global Mach number $\eps$ ranging from $0$ to $\mathcal{O}(1)$ is widely used for all-speed flows \cite{bispen2017asymptotic,thomann2020all}.
	
	For the system \eqref{FEe2}, due to the stiffness of eigenvalues given in \eqref{eigv}, shock capturing schemes with explicit time discretizations are subject to a very strict time step restriction \cite{barsukow2017low}
	\begin{equation} \label{sec1:CFL}
		\Delta t = \text{CFL} \frac{\Delta x}{\max(|\bu|+c/\eps)}\sim \eps \Delta x,
	\end{equation}
	when $\eps$ is small. Here $\Delta t$ is the time step,  $\Delta x$ is the mesh size and CFL denotes the CFL number. Besides, for schemes with numerical viscosities depending on the eigenvalues \eqref{eigv}, e.g. schemes with a Lax-Friedrichs numerical flux, their numerical viscosities would be inversely proportional to $\eps$. As $\eps$ goes to $0$, either the time step is too small, or the numerical viscosities are too large, leading to very inefficient numerical schemes \cite{xing2013high,li2018well}. To avoid that, several semi-implicit schemes have been developed for all-Mach Euler systems \cite{barsukow2017low,kopera2014analysis,thomann2020all,bispen2017asymptotic,benacchio2014blended}, which devote to efficient schemes of easy implementation and uniform time stabilities with respect to $\eps$, and avoid nonlinear iterative solvers as used in the fully implicit schemes. 
	
	In this work, we aim to provide an efficient high order solver for all-Mach full Euler equations with a gravitational source \eqref{FEe2}, which enjoy the benefit of being high order, well-balanced and asymptotic preserving (AP) simultaneously. The design of high order AP method for Euler equations with gravity is a nontrivial task. The target model \eqref{FEe2} is different from all-Mach homogeneous Euler equations considered in \cite{boscarino2022high}, where the stiff acoustic wave only connects to the energy equation, so that only the gradient of pressure in the momentum equation and the flux in the energy equation deserve implicit treatments. All Froude number shallow water equations with a non-flat bottom topography were recently studied in \cite{huang2022high}, where the source term (the gradient of the bottom function) can be merged with the pressure gradient, due to the polytropic EOS $p=p(h)$ with $h$ being the water height. For our model \eqref{FEe2}, the stiff acoustic wave is balanced by the gravity wave, therefore both of them require implicit treatments, which in turn leads to the requirement of implicit treatments for both the density equation \eqref{FEe2a} and the energy equation \eqref{FEe2c}. 
	As a result, one new challenge is that, due to the implicit discretizations for these conservative variables, it is very hard to numerically derive a consistent discretization for the perturbation of the potential temperature $\theta_2$, which appears in the incompressible limit, see \eqref{FEe_LE1}. This is mainly due to the highly nonlinear relationships between conservative and primitive variables, and the nonlinear EOS \eqref{EOS2}. Therefore, the existing approaches in \cite{boscarino2022high,huang2022high} cannot be directly applied.
	
	To address such an issue, one novel contribution of this paper is to add the evolution of the perturbation $\theta_2$ of the potential temperature to \eqref{FEe2} in the numerical discretization. 
	The original energy equation corresponds to the updating of hydrostatic pressure in the compressible regime for conservational purpose, and the added equation of $\theta_2$ extracts the contribution of the energy equation in the scale of $\mathcal{O}(\eps^2)$, and is corresponding to the update of the hydrodynamic pressure $p_2$ in the incompressible regime. The added equation of $\theta_2$ has simplified the treatment of the nonlinear EOS \eqref{EOS2}, which helps us to ensure that the correct asymptotic limit is achieved.
	Then, we split the system \eqref{FEe2} into a (non-stiff) nonlinear low dynamic material wave to be treated explicitly, and (stiff) fast acoustic and gravity waves to be treated implicitly. A high order implicit-explicit (IMEX) method is utilized as the temporal discretization. With the aid of explicit time evolution for the perturbation of potential temperature $\theta_2$, we design a novel well-balanced finite difference WENO scheme for the spatial discretization. The temporal update of the conservative variables $(\rho, \rho \bu, E)$ is through the conservative scheme, and $\theta_2$ is only an auxiliary variable to aid with the design of AP method. The proposed method can be formally proved to be AP and asymptotically accurate (AA). In addition, in the AP and AA analyses, we can show that, as $\eps\ra 0$, \eqref{FEe2c} is a consistent updating of hydrostatic pressure, while $\theta_2$ contributes to the hydrodynamic pressure. Extensive one- and two-dimensional numerical experiments demonstrate the high order accuracy, well-balanced, AP and AA properties of our proposed approach, as well as good performances in both high and low Mach regimes.
	
	The rest of the paper is organized as follows. In Section \ref{sec2}, a low Mach limit of all-Mach Euler equations with gravity is reviewed. AP well-balanced numerical methods are described in Section \ref{sec3}, with AP and AA analyses given in Section \ref{sec4}. Numerical experiments are performed in Section \ref{sec5}, followed by concluding remarks in Section \ref{sec6}. The detailed step-by-step flowchart of the fully discrete high order scheme with a multi-stage IMEX time discretization and a corresponding detailed proof of its AA property are presented as supplementary materials.
	
	\section{Low Mach limit for full Euler equations with gravity}
	\label{sec2}
	
	\setcounter{equation}{0}
	\setcounter{figure}{0}
	\setcounter{table}{0}
	
	In this section, let us briefly review the low Mach limit of Euler equations with gravity \eqref{FEe2}. To derive such a limit, it would be more convenient to start with an equation for the pressure $p$, instead of \eqref{FEe2c} for the total energy. By utilizing the EOS \eqref{EOS2}, \eqref{FEe2c} can be replaced by
	\begin{equation}
		\label{P1}
		p_t + \bu \cdot \nabla p + \gamma p \nabla \cdot \bu = 0.
	\end{equation}
	If we further define a potential temperature $\theta$ from $p=(\rho\theta)^{\gamma}$, it yields
	\begin{equation}
		\label{EC1}
		(\rho\theta)_t + \nabla\cdot(\rho\theta\bu) = 0,
	\end{equation}
	which reduces to (after combined with \eqref{FEe2a})
	\begin{equation}
		\label{eq:theta}
		\theta_t + \bu\cdot\nabla\theta = 0,
	\end{equation}
	namely, the potential temperature $\theta$ satisfies a simple transport equation with the velocity $\bu$. Now the model \eqref{FEe2} can be rewritten as
	\begin{subequations}
		\label{FEe_ND}
		\begin{align}
			& \rho_{t} + \nabla \cdot(\rho \bu) =0, \label{FEe_NDa} \\
			& (\rho \bu)_t+\nabla \cdot(\rho \bu \otimes \bu)+\frac{1}{\eps^2}\nabla p =-\frac{1}{\eps^2}\rho \nabla\Phi, \label{FEe_NDb} \\
			& \theta_t+\bu\cdot\nabla \theta =0. \label{FEe_NDc}
		\end{align}
	\end{subequations}
	Let us perform a Chapman-Enskog expansion in the following form \cite{bispen2017asymptotic}
	\begin{equation}
		\label{Exp2}
		\left\{
		\begin{array}{ll}
			\rho(\bx,t) = \rho_0(\bx,t) +\eps^2 \rho_2(\bx,t) + \cdots,\\ [1mm]
			\bu(\bx,t) = \bu_0(\bx,t)+ \eps \bu_1(\bx,t) + \cdots,\\ [1mm]
			p(\bx,t) = p_0(\bx,t)+ \eps^2 p_2(\bx,t) + \cdots,\\ [1mm]
			\theta(\bx,t) = \theta_0(\bx,t)+ \eps^2 \theta_2(\bx,t)+ \cdots.
		\end{array}\right.
	\end{equation}
	By substituting them into \eqref{FEe_NDb}, we can collect the leading order $\mathcal{O}(1/\eps^2)$ terms
	\begin{equation}
		\label{ES2}
		\nabla p_0 = -\rho_0 \nabla\Phi,
	\end{equation}
	where $p_0$ is the hydrostatic pressure. The $\mathcal{O}(1)$ terms of \eqref{FEe_NDb} yield
	\begin{equation}
		\label{m2}
		(\rho_0 \bu_0)_t+\nabla \cdot(\rho_0 \bu_0 \otimes \bu_0)+\nabla p_2 =-\rho_2 \nabla\Phi,
	\end{equation}
	where $p_2$ corresponds to the hydrodynamic pressure.
	
	To obtain a closed system for the low Mach limit, a further assumption is needed due to the extra source term $-\rho_2\nabla\Phi$ appearing in \eqref{m2}, which is different from the homogeneous all Mach full Euler system without gravity \cite{boscarino2022high}. One commonly used assumption \cite{klein2009asymptotics,bispen2017asymptotic} is a constant background potential temperature $\theta_0$ with hydrostatic background states $\rho_0(\bx)$ and $p_0(\bx)$ satisfying \eqref{ES2}. With such assumption, from \eqref{FEe_NDa}, \eqref{m2}, \eqref{FEe_NDc} and the expansion \eqref{Exp2}, we obtain the following limiting incompressible equations
	\begin{equation}
		\label{FEe_LE1}
		\left\{
		\begin{array}{ll}
			\nabla \cdot\left(\rho_0 \bu_0\right) =0, \\ [2mm]
			\left(\rho_0 \bu_0\right)_t+\nabla \cdot(\rho_0 \bu_0 \otimes \bu_0)+\nabla p_2 =-\rho_2 \nabla\Phi, \\[2mm]
			(\theta_2)_t + \bu_0\cdot\nabla \theta_2 = 0,
		\end{array}\right.
	\end{equation}
	which is known as the Ogura and Phillips model \cite{ogura1962scale}.
	The model \eqref{FEe_LE1} contains four unknowns $\bu_0$, $\rho_2$, $p_2$, $\theta_2$, and is closed by the relation $p=(\rho\,\theta)^\gamma$, which yields
	\begin{equation}
		\label{C1}
		p_2 = \lim_{\eps\rightarrow 0} \frac{p-p_0}{\eps^2} =\lim_{\eps\rightarrow 0} \frac{(\rho\,\theta)^\gamma-(\rho_0\,\theta_0)^\gamma}{\eps^2}=\frac{\gamma p_0}{\rho_0\theta_0}(\rho_0\,\theta_2+\theta_0\,\rho_2),
	\end{equation}
	where the Chapman-Enskog expansion \eqref{Exp2} is used. 
	
	As compared to the homogeneous all Mach full Euler system without gravity \cite{boscarino2022high}, an extra equation of $\theta_2$ exists in the incompressible system \eqref{FEe_LE1}. Due to the existence of the source term in the momentum equation of \eqref{FEe_LE1}, the limit model \eqref{FEe_LE1} requires \eqref{C1} to close the system, which will lead to additional numerical challenges as described in the next section.
	
	\begin{rem}
		Another model assumes a stratified background potential temperature $\theta_0(\bx)$ with $\nabla \theta_0(\bx) = \mathcal{O}(\eps^2)$, which is typically used in a nearly incompressible regime. Similarly if we have hydrostatic background states $\rho_0(\bx)$ and $p_0(\bx)$ satisfying \eqref{ES2}, the nearly incompressible system is given as
		\begin{equation}
			\label{FEe_LE2}
			\left\{
			\begin{array}{ll}
				\nabla \cdot\left(\rho_0 \bu_0\right) =0, \\ [2mm]
				\left(\rho_0 \bu_0\right)_t+\nabla \cdot(\rho_0 \bu_0 \otimes \bu_0)+\nabla p_2 =-\rho_2 \nabla\Phi, \\[2mm]
				(\theta_2)_t + \bu_0\cdot\nabla \theta_2 + \frac{1}{\eps^2}\bu_0\cdot\nabla\theta_0 = 0.
			\end{array}\right.
		\end{equation}
		This is known as the Bannon's  anelastic model \cite{bannon1996anelastic}. As $\eps\rightarrow 0$, $\theta_0(\bx)$ converges to a constant and \eqref{FEe_LE2} becomes \eqref{FEe_LE1}.	
	\end{rem}

	\section{Numerical scheme}
	\label{sec3}
	\setcounter{equation}{0}
	\setcounter{figure}{0}
	\setcounter{table}{0}
	
	In this section, we propose a class of semi-implicit high order finite difference weighted essentially non-oscillatory (WENO) schemes, with both AP and well-balanced properties, for the all-Mach Euler system with gravity \eqref{FEe2}. 
	A novel new framework is presented to design a well-balanced high order semi-implicit scheme, that is AP and AA in the incompressible limit, and is conservative and robust in the compressible regime. 
	
	\subsection{Model problem and semi-implicit splitting}
	For the model \eqref{FEe2}-\eqref{EOS2}, the gravitational source appears both in the second momentum and third energy equations. It couples all the conservative unknowns together, which is different from the source term due to a non-flat bottom topography in the shallow water equations \cite{huang2022high}. Compared to all Mach homogeneous Euler equations without gravity in \cite{boscarino2022high}, this introduces two extra variables $\rho_2$ and $\theta_2$ in the limiting equations \eqref{FEe_LE1}. As a result, a fully nonlinear coupling of all conservative variables exists, and it is highly nontrivial to extract an elliptic equation for the extra variables $\rho_2$ or $p_2$ from \eqref{FEe2} through the nonlinear EOS \eqref{EOS2}. Note that such elliptic equation is very important to ensure a correct low Mach limit \eqref{FEe_LE1}. Therefore, the frameworks developed in \cite{huang2022high,boscarino2022high} cannot be directly applied to \eqref{FEe2}. 
	
	In order to address the difficulty to ensure a correct asymptotic limit from \eqref{FEe2}, one novel idea of this paper is to include the evolution of perturbation of the potential temperature $\theta_2$ to the system 
	\begin{equation}
		\label{S3_1}
		\left\{
		\begin{array}{ll}
			\rho_t + \nabla\cdot(\rho\bu) = 0,\\[2mm]
			(\rho \bu)_t+\nabla \cdot(\rho \bu \otimes \bu )+ \frac{1}{\eps^2}\nabla p = - \frac{1}{\eps^2}\rho\nabla\Phi, \\[2mm]
			E_t + \nabla \cdot \left( \left(E+p\right)\bu \right) = -\rho\bu \cdot\nabla\Phi,\\[2mm]
			(\theta_2)_t + \bu \cdot \nabla \theta_2 + \frac{1}{\eps^2}\bu \cdot\nabla\theta_0 = 0.
		\end{array}
		\right.
	\end{equation}
	Note that the equation of $\theta_2$ is taken from \eqref{FEe_NDc} by assuming an expansion $\theta(\bx,t)=\theta_0(\bx)+\eps^2\theta_2(\bx,t)$, mimicking the one in \eqref{Exp2}. Here $\theta_0(\bx)$ is assumed to be a pre-determined time independent background potential temperature and $\nabla\theta_0(\bx)=\mathcal{O}(\eps^2)$. This equation can ensure the right incompressible limit \eqref{FEe_LE1}, and more generally is consistent with \eqref{FEe_LE2} in the nearly incompressible regime.
	The advantage of \eqref{S3_1}, compared with \eqref{FEe2}, is that the third energy equation corresponds to the updating of hydrostatic pressure $p_0$ (in the compressible regime) for conservational purpose, while the fourth equation of $\theta_2$ extracts the contribution of the energy equation in the scale of $\mathcal{O}(\eps^2)$, and is corresponding to the hydrodynamic pressure $p_2$ which plays the role as a Lagrangian multiplier in the incompressible regime. The fourth equation of $\theta_2$, which is derived from \eqref{FEe_NDc}, has simplified the treatment of the nonlinear EOS \eqref{EOS2} as compared to the third energy equation, and this helps us to easily form an elliptic equation for $\rho_2$ as described in the design of our scheme below.
	
	We first follow the idea in \cite{haack2012all,tang2012} to split the system \eqref{S3_1} into a stiff part and a non-stiff component
	\begin{equation}
		\label{split}
		\frac{dU}{dt} = -\nabla \cdot\mathcal{F}_E -\nabla \cdot\mathcal{F}_I  - \mathcal{S}_E - \mathcal{S}_I,
	\end{equation}
	where $U=(\rho,\rho\bu,E,\theta_2)^T$ and
	\begin{subequations}
		\begin{align}
			&\nabla\cdot\mathcal{F}_E
			= \alpha\Big(
			\nabla\cdot (\rho\bu)_E,
			\nabla\cdot(\rho\bu\otimes\bu + p\,\mI)_E,
			\nabla\cdot((E+p)\bu)_E,
			0
			\Big)^T
			\\ &
			+ 
			\Big(
			0, (1-\alpha)\,\nabla\cdot(\rho\bu\otimes\bu)_E+ \alpha\frac{1-\eps^2}{\eps^2}\nabla p_E,
			0,
			\bu_E\cdot\nabla\theta_{2,E} + \frac{1}{\eps^2}\bu_E\cdot\nabla\theta_0
			\Big)^T, \notag
			\\
			&\nabla\cdot\mathcal{F}_I
			= (1-\alpha)\left(
			\nabla\cdot(\rho\bu)_I,
			\frac{1}{\eps^2}\nabla p_I,
			\nabla\cdot((E+p)\bu)_I,
			0
			\right)^T,
			\\
			&\mathcal{S}_E
			=
			\alpha\left(
			0,
			\frac{1}{\eps^2} \rho_E\nabla\Phi,
			(\rho\bu)_E\cdot\nabla\Phi,
			0
			\right)^T,
			\\
			& \mathcal{S}_I
			= (1-\alpha)
			\left(
			0,
			\frac{1}{\eps^2} \rho_I\nabla\Phi,
			(\rho\bu)_I\cdot\nabla\Phi,
			0
			\right)^T.
		\end{align}
	\end{subequations}
	The subscripts $E$ and $I$ represent non-stiff and stiff parts of the flux function $\mathcal{F}$ and the source term $S$, which will be discretized explicitly and implicitly, respectively. In the splitting, we take the splitting parameter $\alpha = \min(\eps^2, 1)$, that is, $\alpha=\eps^2$ when $\eps\le 1$ and \eqref{split} turns to be fully explicit if $\eps\ge1$. In the following, we only consider $\alpha=\eps^2$ with $\eps \le 1$ for ease of presentation.
	
	\subsection{First order IMEX scheme}\label{sec3.2}
	We start with presenting the first order IMEX AP time discretization for \eqref{S3_1}. We focus on time discretization only and keep space to be continuous. The spatial discretization will be discussed afterward. Following the splitting in \eqref{split}, the semi-discrete (in time) scheme takes the form
	\begin{subequations}
		\label{IMEX1}
		\begin{align}
			&\frac{\rho^{n+1} - \rho^n}{\Dt} + \eps^2\,\nabla\cdot(\rho\bu)^{n} + (1-\eps^2)\,\nabla\cdot(\rho\bu)^{n+1} = 0, \label{IMEX1:a} \\
			&\frac{(\rho \bu)^{n+1} - (\rho \bu)^n}{\Dt}+\nabla \cdot(\rho \bu \otimes \bu + p\,\mI)^n+\frac{1-\eps^2}{\eps^2}\nabla p^{n+1}  \label{IMEX1:b} \\ 
			& \hspace{3.5cm} =- \rho^n\nabla\Phi - \frac{1-\eps^2}{\eps^2}\rho^{n+1} \nabla\Phi,   \notag \\
			&\frac{E^{n+1} - E^n}{\Dt} + \eps^2\,\nabla\cdot \left((E+p)\bu\right)^{n}
			+(1-\eps^2)\,\nabla\cdot \left((E+p)\bu\right)^{n+1}     \label{IMEX1:c} \\ 
			& \hspace{3.5cm}
			= -\eps^2\,\left(\rho\bu\right)^{n} \cdot\nabla\Phi -(1-\eps^2)\,\left(\rho\bu\right)^{n+1}\cdot \nabla\Phi,  \notag \\
			&\frac{\theta_2^{n+1} - \theta_2^n}{\Dt}+ \bu^n\cdot\nabla\theta_2^n +\frac{1}{\eps^2}\bu^n\cdot\nabla\theta_0 = 0. \label{IMEX1:d}
		\end{align}
	\end{subequations}
	Introducing the variables 
	\begin{subequations}
		\label{Ustar}
		\begin{align}
			&\widetilde{\rho} = \rho^n - \Dt\,\eps^2\,\nabla\cdot(\rho\bu)^{n}, \label{Ustara}\\
			&\widetilde{\rho\bu}=  (\rho \bu)^n  - \Dt\Big(\nabla \cdot(\rho \bu \otimes \bu + p\mI)^n + \rho^n\nabla\Phi\Big), \label{Ustarb}\\
			&\widetilde{E}=E^n-\Dt\,\eps^2\,\Big( \nabla\cdot \left((E+p)\bu\right)^{n}
			+\,\left(\rho\bu\right)^{n} \cdot\nabla\Phi\Big), \label{Ustarc}
		\end{align}
	\end{subequations}	
	we can rewrite \eqref{IMEX1} as
	\begin{subequations}
		\label{S3_8}
		\begin{align}
			&\rho^{n+1} = \widetilde{\rho} -\Dt\, (1-\eps^2)\,\nabla \cdot (\rho\bu)^{n+1},
			\label{S3_8_1} \\
			&(\rho \bu)^{n+1}= \widetilde{\rho \bu}
			- \Dt\,\frac{1-\eps^2}{\eps^2} \left(\nabla p^{n+1} + \rho^{n+1} \nabla\Phi\right),
			\label{S3_8_2} \\
			&E^{n+1} = \widetilde{E} -\Dt\,(1-\eps^2)\,\left(\nabla\cdot \left((E+p)\bu\right)^{n+1}  +\,\left(\rho\bu\right)^{n+1}\cdot \nabla\Phi\right), \label{S3_8_3} \\
			&\theta_2^{n+1} = \theta_2^n - \Dt\, \left(\bu^n\cdot\nabla\theta_2^n\right) - \Dt\,\left(\frac{1}{\eps^2}\bu^n\cdot\nabla\theta_0\right),
			\label{theta2} 
		\end{align}
	\end{subequations}
	which will be solved with the following three steps: 
	\bit
	\item {\bf Step 1}: 
	Combining \eqref{S3_8_1} and \eqref{S3_8_2} leads to
	\begin{equation}
		\label{S3_9}
		\rho^{n+1} = \widetilde{\rho}- (1-\eps^2)\,\Dt\,\nabla \cdot (\widetilde{\rho\bu}) + \Dt^2\,\frac{(1-\eps^2)^2}{\eps^2}\nabla\cdot\left(\nabla p^{n+1} +  \rho^{n+1}\nabla\Phi\right),
	\end{equation}
	We now introduce $p_2$ and $\rho_2$ from
	\begin{equation}
		\label{pr}
		p^{n+1} = p_0(\bx) + \eps^2\,p_2^{n+1}, \quad \rho^{n+1} = \rho_0(\bx) + \eps^2\,\rho_2^{n+1},
	\end{equation}
	according to \eqref{Exp2}, where $p_0(\bx)$ and $\rho_0(\bx)$ satisfy $\nabla p_0 = - \rho_0\nabla\Phi$ as given in \eqref{ES2}. Therefore \eqref{S3_9} becomes
	\begin{equation}
		\label{eqr2p2}
		\eps^2\rho_2^{n+1} = \widetilde{\rho}- (1-\eps^2)\,\Dt\,\nabla \cdot (\widetilde{\rho\bu})+ \Dt^2(1-\eps^2)^2\nabla\cdot\left(\nabla p_2^{n+1} +  \rho_2^{n+1}\nabla\Phi\right)-\rho_0,
	\end{equation}
	which cannot be solved due to two unknowns $p^{n+1}_2$ and $\rho_2^{n+1}$. From the relation $p=(\rho\theta)^\gamma$, we have
	\begin{equation}
		\label{p2a}
		p_2^{n+1} = (p-p_0)/\eps^2
		= \left[\Big((\rho_0+\eps^2\,\rho_2^{n+1})(\theta_0+\eps^2\,\theta_2^{n+1})\Big)^\gamma-p_0\right]/\eps^2.
	\end{equation}
	Substituting it into \eqref{eqr2p2} leads to a nonlinear system for $\rho_2^{n+1}$, which can be solved iteratively. To simplify the computation, we approximate \eqref{p2a} with a linearization up to an approximation error $\mathcal{O}(\eps^2)$,
	\begin{equation}
		\label{p2}
		p_2^{n+1}
		=\frac{\gamma p_0}{\rho_0\theta_0}(\rho_0\theta_2^{n+1} + \rho^{n+1}_2\,\theta^{n+1}),
	\end{equation}
	where we denote $\theta^{n+1} = \theta_0 + \eps^2\,\theta_2^{n+1}$.
	Substituting \eqref{p2} into \eqref{eqr2p2} yields
	\begin{equation}
		\label{S3_10}
		\eps^2\rho_2^{n+1} = \widetilde{\widetilde{\rho}}+ \Dt^2(1-\eps^2)^2\nabla\cdot\left(\nabla \left(\frac{\gamma p_0}{\rho_0\theta_0}\theta^{n+1}\,\rho_2^{n+1}\right) +  \rho_2^{n+1}\nabla\Phi\right), 
	\end{equation}
	where
	\begin{equation}
		\widetilde{\widetilde{\rho}}= \widetilde{\rho} - (1-\eps^2)\,\Dt\,\nabla \cdot (\widetilde{\rho\bu}) + \Dt^2(1-\eps^2)^2\Delta\left( \frac{\gamma p_0}{\theta_0}\theta^{n+1}_2\right)- \rho_0.
	\end{equation}
	The purpose of adding the fourth equation on $\theta_2$ in \eqref{S3_1} is to evaluate $\theta^{n+1}_2$ from \eqref{theta2} explicitly. With such information, $\theta^{n+1}$ can be computed and \eqref{S3_10} is only a linear system for $\rho_2^{n+1}$. Otherwise, without \eqref{theta2}, \eqref{S3_10} will be coupled to \eqref{S3_8_3}, which is very complicated to solve for $\rho_2$ and $\theta_2$. 
	After obtaining $\rho^{n+1}_2$, $p_2^{n+1}$ is available from \eqref{p2}.
	
	\item {\bf Step 2}:  
	$(\rho\bu)^{n+1}$ directly follows from \eqref{S3_8_2} and \eqref{pr}.
	In order to ensure exact mass and energy conservation, especially in the compressible regime, we further update $\rho^{n+1}$ from \eqref{S3_8_1}, and finally $E^{n+1}$ from \eqref{S3_8_3}.
	We note that in \eqref{S3_8_3}, the implicit term $((E+p)\bu)^{n+1}$ is computed based on
	\begin{equation}
		((E+p)\bu)^{n+1}=H^{n+1}\,(\rho\bu)^{n+1}, 
	\end{equation}
	where the enthalpy $H^{n+1}$ is given by
	\begin{equation}
		\label{H}
		H^{n+1}=\left(\frac{E+p}{\rho}\right)^{n+1}=\frac{\gamma}{\gamma-1}\frac{p^{n+1}}{\rho^{n+1}}+\frac12\eps^2\frac{|(\rho\bu)^{n+1}|^2}{(\rho^{n+1})^2},
	\end{equation}
	following the EOS \eqref{EOS2}. To compute $H^{n+1}$, we use $\rho^{n+1}$ and $p^{n+1}$ from \eqref{pr} available after step 1. In this way, $(\rho\bu)^{n+1}$, $\rho^{n+1}$ and $E^{n+1}$ are updated in a conservative form and explicitly via a sequential way.
	\eit
	
	\begin{rem}
		\label{rem31}
		In the momentum equation \eqref{IMEX1:b}, the stiff gravitational source $\rho\nabla\Phi$ which depends on the unknown variable $\rho$ needs to be discretized implicitly. Correspondingly, we need an implicit discretization for the density equation \eqref{IMEX1:a} in the low Mach regime. On the other hand, the implicit discretization of $\nabla p$ needs to be coupled with an implicit discretization of the energy equation \eqref{IMEX1:c}. In this way, the mass, momentum and energy equations in \eqref{IMEX1} all have components needing implicit discretizations, and uniform time stability independent of $\eps$ can be obtained. This can be viewed as, in some sense, a combination of two approaches for the isentropic and full Euler equations \cite{boscarino2019high,boscarino2022high}. We refer to \cite{boscarino2019high} for the Fourier analysis with respect to the rationale behind these implicit discretizations.
	\end{rem}

	\begin{rem} 
		In \cite{bispen2017asymptotic}, the authors used \eqref{EC1} instead of \eqref{FEe2c}, and designed an AP scheme in the low Mach regime. $\rho\theta$ in \eqref{EC1} is not a physically conservative variable. In this work, we keep the energy equation \eqref{FEe2c}, which is important for shock capturing purpose in the high Mach regime. Besides, a numerical discretization of \eqref{EC1} does not straightforwardly result in a consistent discretization of $\theta_2$ in the limiting equation \eqref{FEe_LE1} as $\eps\rightarrow 0$. It requires $\nabla\cdot(\rho\bu)=0$ to be satisfied at the discrete level as $\eps\rightarrow 0$, which is not an easy task.
	\end{rem}
	
	\subsection{High order IMEX Runge-Kutta (IMEX-RK) scheme}\label{sec3.3}
	We now generalize the first-order semi-implicit scheme \eqref{IMEX1} to high-order, by utilizing a high order IMEX-RK scheme proposed in \cite{boscarino2016high},
	which has already been used in \cite{boscarino2019high,boscarino2022high}.
	We first denote the split system \eqref{split} as 
	\begin{equation}
		\label{ho}
		U_t = \mathcal{H} (U_E,U_I), \qquad U(t_0) = U_0.
	\end{equation}
	where $\mathcal{H}(U_E,U_I) = -\nabla \cdot\mathcal{F}_E-\nabla \cdot\mathcal{F}_I-\mathcal{S}_E - \mathcal{S}_I$, with $U_E=(\rho_E,(\rho\bu)_E,E_E,\theta_{2,E})^T$ and $U_I=(\rho_I,(\rho\bu)_I,E_I,\theta_{2,I})^T$ representing the terms that will be discretized explicitly and implicitly. 
	The solutions $U_E$ and $U_I$ will be updated separately, that is
	\begin{equation}\label{PartitionedSyst}
		\left\{
		\begin{array}{l}
			(U_E)_t =  \mathcal{H}(U_E,U_I), \\ [2mm]
			(U_I)_t =  \mathcal{H}(U_E,U_I),
		\end{array}
		\right.
	\end{equation}
	with the same initial condition
	\begin{equation}
		\label{PartitionedSyst_IF}
		U_E(t_0) = U_I(t_0) = U_0.
	\end{equation}
	Following \cite{boscarino2019high,boscarino2022high}, we apply an IMEX-RK scheme to \eqref{PartitionedSyst} with a double Butcher tableau \cite{butcher2016}
	\begin{equation}\label{DBT}
		\begin{array}{c|c}
			\tilde{c} & \tilde{A}\\
			\hline
			\vspace{-0.25cm}
			\\
			& \tilde{b}^T \end{array}, \ \ \ \ \
		\begin{array}{c|c}
			{c} & {A}\\
			\hline
			\vspace{-0.25cm}
			\\
			& {b^T} \end{array},
	\end{equation}
	where $\tilde{A} = (\tilde{a}_{ij})$ is an $s \times s$ matrix for an explicit scheme with $\tilde{a}_{ij}=0$ for $j \geq i$, and $A = ({a}_{ij})$ is an  $s \times s$ matrix for an implicit scheme. A diagonally implicit RK (DIRK) scheme with $a_{ij}=0$ for $j > i$ is used, which is simple and efficient for implicit discretization. Other vectors are $\tilde{c}=(\tilde{c}_1,...,\tilde{c}_s)^T$, $\tilde{b}=(\tilde{b}_1,...,\tilde{b}_s)^T$, and $c=(c_1,...,c_s)^T$, $b=(b_1,...,b_s)^T$, where $\tilde{c}$ and $c$ satisfy the relation
	$\tilde{c}_i = \sum_{j=1}^{i-1} \tilde a_{ij}$, $c_i = \sum_{j=1}^{i} a_{ij}$.
	For the high order IMEX-RK scheme, as discussed in \cite{boscarino2019high,boscarino2022high},
	a stiffly accurate property with $b^T=\be_s^TA$ where  $\be_s=(0,\cdots,0,1)^T$ for the implicit part is preferred, which can yield the AA property of the scheme.
	A sketched procedure of the IMEX-(DI)RK scheme is given as follows:
	\begin{enumerate}
		\item Set $U_E^{(0)}=U_I^{(0)}=U^n$. For each inner stage $i = 1  \text{ to }  s$, 
		\begin{itemize}
			\item Update the explicit solution $U_E^{(i)}$
			\begin{equation}
				\label{S3_E7}
				U_E^{(i)} = U^n + \Delta t\sum^{i-1}_{j=1}\tilde{a}_{ij}\mathcal{H}(U_E^{(j)},U_I^{(j)});
			\end{equation}
			\item Compute $U_\star^{(i)}$ based on known solutions at previous stages
			\begin{equation}
				\label{S3_E8}
				U_\star^{(i)} = U^n + \Delta t\sum^{i-1}_{j=1}a_{ij}\mathcal{H}(U_E^{(j)},U_I^{(j)}),
			\end{equation}
			then update the implicit solution $U_I^{(i)}$ via
			\begin{equation}
				\label{S3_E9}
				U_I^{(i)} = U_\star^{(i)} + \Delta ta_{ii}\mathcal{H}(U_E^{(i)},U_I^{(i)});
			\end{equation}
		\end{itemize}
		\item Update the solution $U^{n+1}$ at the next time level $t^{n+1}$ by
		\begin{equation}
			\label{S3_E15}
			U^{n+1}  = U^n + \Delta t \sum_{i=1}^s b_i \mathcal{H}(U_E^{(i)},U_I^{(i)}).
		\end{equation}
		
		
	\end{enumerate}

	\subsection{Fully discrete scheme with high order spatial discretizations}
	\label{sec3.4}
	In this part, we introduce high order WENO spatial discretizations for \eqref{split}. We expect the following desirable properties from the spatial discretization. Firstly, it should be suitable for all Mach numbers, that is, for $\eps$ ranging from $0$ to $\mathcal{O}(1)$. Secondly, it can preserve the well-balanced property, namely, the zero-velocity steady state solution \eqref{equistate} can be maintained exactly on the discrete level. To achieve both purposes, we adopt the spatial discretizations proposed in \cite{boscarino2022high}, and well-balanced reconstruction strategies from \cite{xing2013high,li2018well}.
	
	Without loss of generality, we present \eqref{split} in the two-dimensional spatial setting. Taking $\alpha=\eps^2$, we denote $U = (\rho,\rho u, \rho v,E,\theta_2)^T$ and
	\begin{subequations} \label{F2d}
		\begin{align}
			&\nabla \cdot\mathcal{F}_E  = \partial_x\mathcal{F}_E ^x + \partial_y\mathcal{F}_E ^y, \quad \mathcal{S}_E = \mathcal{S}_E^x + \mathcal{S}_E^y, \\
			&\nabla \cdot\mathcal{F}_I  = (1-\eps^2)(\partial_x\mathcal{F}_I ^x + \partial_y\mathcal{F}_I ^y), \quad \mathcal{S}_I = (1-\eps^2)(\mathcal{S}_I^x + \mathcal{S}_I^y), 
		\end{align}
	\end{subequations}
	with
	\begin{subequations}
		\label{Fx2d}
		\begin{align}
			&\partial_x\mathcal{F}_E^x = \eps^2\Big(\partial_x(\rho u), \,\partial_x(\rho u^2 + p), \,\partial_x(\rho uv),\,\partial_x((E+p)u), \,0\Big)_E^T \label{FEx} \\
			&\hspace{1.3cm}+(1-\eps^2)\Big(0,\,\partial_x(\rho u^2 + p), \,\partial_x(\rho uv),\,0,\,0\Big)_E^T \notag \\
			&\hspace{1.3cm}+\Big(0,\,0, \,0,\,0,\,u\partial_x\theta_{2}+\frac{1}{\eps^2}u\partial_x\theta_0\Big)_E^T
			\notag \\
			&\partial_y\mathcal{F}_E^y = \eps^2\Big(\partial_y(\rho v),\,\partial_y(\rho uv), \,\partial_y(\rho v^2 + p),\,\partial_y((E+p)v), \,0\Big)_E^T \label{FEy}\\
			&\hspace{1.3cm}+(1-\eps^2)\Big(0,\,\partial_y(\rho uv), \,\partial_y(\rho v^2 + p),\,0, \,0\Big)_E^T \notag \\
			&\hspace{1.3cm}+\Big(0,\,0,\,0,\,0, \,v\partial_y\theta_{2}+\frac{1}{\eps^2}v\partial_y\theta_0\Big)_E^T, \notag \\
			&\partial_x\mathcal{F}_I^x = \Big(\partial_x(\rho u), \,\partial_x p_{2}, \,0,\,\partial_x((E+p)u), \,0\Big)_I^T, \label{FIx}
			\\
			&\partial_y\mathcal{F}_I^y = \Big(\partial_y(\rho v),\,0, \,\partial_yp_{2},\,\partial_y((E+p)v), \,0\Big)_I^T, \label{FIy}
		\end{align}
	\end{subequations}
	and
	\begin{subequations}
		\label{s2d}
		\begin{align}
			&\mathcal{S}_E^x = \Big(0,\rho\,\partial_x\Phi,0,\eps^2\rho u\partial_x\Phi,0\Big)_E^T,
			\quad \mathcal{S}_E^y = \Big(0,0,\rho\,\partial_y\Phi,\eps^2\rho v\partial_y\Phi,0\Big)_E^T, \label{sE}\\
			&\mathcal{S}_I^x = \Big(0,\rho_2\,\partial_x\Phi,0,\rho u\partial_x\Phi,0\Big)_I^T,
			\quad \mathcal{S}_I^y = \Big(0,0,\rho_2\,\partial_y\Phi,\rho v\partial_y\Phi,0\Big)_I^T. \label{sI}
		\end{align}
	\end{subequations}
	Here \eqref{pr} with $\nabla p_0=-\rho_0\nabla\Phi$ is used in \eqref{FIx}, \eqref{FIy} and \eqref{sI}, by replacing $p$ and $\rho$ with $p_2$ and $\rho_2$ respectively. 
	
	For a high order finite difference discretization, we take a uniform cartesian mesh with mesh sizes $\Dx$ and $\Dy$ along $x$ and $y$ directions respectively. The grid points are
	$(x_i,y_j)$ for $i=1,2,\dots,N_x$, $j=1,2,\dots,N_y$. We denote $w_{ij}=w(x_i,y_j)$ for short with any variable $w$, and the interface values are denoted as $w_{i\pm \frac{1}{2},j}$ and $w_{i,j\pm \frac{1}{2}}$ respectively.
	
	We now briefly describe our spatial discretizations. Denoting $\bq=\rho\bu$, the high order WENO approximation of the spatial operator $\mathcal{H}(U_E,U_I)$ defined in \eqref{ho} is given by
	\begin{equation}
		\label{hoh2}
		\mathcal{H}_{WENO}(U_E,U_I)=-\left(
		\begin{aligned}
			&\eps^2\nabla_{CW}\cdot \bq_E
			+(1 -\eps^2)\nabla_{W}\cdot \bq_I
			\\
			&\nabla^{WB}_{CW}\cdot \left(\frac{\bq \otimes \bq}{\rho} + p\, \mI\right)_E  
			-\frac{\rho_E}{\rho_0}\nabla^{WB}_{CW}p_0\\&\hspace{1.2cm} +(1-\eps^2)\left(\nabla_{W}p_{2,I} + \rho_{2,I}\nabla\Phi \right) 
			\\
			&\eps^2\left(\nabla_{CW}\cdot(H\bq)_E
			+ \bq_E\cdot \nabla\Phi\right)  \\&\hspace{1.2cm}
			+(1 -\eps^2)\left(\nabla_{W}\cdot(H\bq)_I +  \bq_I\cdot \nabla\Phi  \right) 
			\\
			&\bu_E^{(j)}\cdot \nabla_{UW} \theta_{2,E} +\frac{1}{\eps^2}\bu_E\nabla\theta_0
		\end{aligned}
		\right),
	\end{equation}
	where the WENO operators $\nabla_{CW}$, $\nabla_{W}$, $\nabla_{UW}$ and $\nabla^{WB}_{CW}$ will be defined below.
	The fully discrete scheme can be obtained by combining the spatial discretization \eqref{hoh2} with the high order IMEX temporal discretization outlined in \eqref{S3_E7}-\eqref{S3_E15}. Note that this involves an implicit step in \eqref{S3_E9}. Following the approach to solve the first order IMEX method \eqref{S3_8} in Section \ref{sec3.2}, we can convert it into an elliptic equation and then update the solution in a sequential way. For the high order scheme in \eqref{S3_E9}, we form the following elliptic equation for $\rho_2$
	\begin{equation}
		\label{rho2}
		\eps^2\rho_{2,I}^{(i)} = \rho_{\star\star\star}^{(i)}
		+ (\Dt a_{ii})^2
		(1 -\eps^2)^2\left(\Delta\Big(\frac{\gamma p_0}{\theta_0}\,\theta_{2,E}^{(i)} + \frac{\gamma p_0}{\rho_0\theta_0} \theta^{(i)}\,\rho_{2,I}^{(i)}\Big)  + \nabla\cdot\Big(\rho_{2,I}^{(i)}\nabla\Phi\Big) \right),
	\end{equation}
	with
	\begin{equation}
		\rho_{\star\star\star}^{(i)} = \rho_{\star\star}^{(i)} - \Dt a_{ii}
		(1 -\eps^2)\nabla_W \cdot \bq_{\star\star}^{(i)} - \rho_0 ,
	\end{equation}		
	where 
	\begin{subequations}
		\begin{align}
			&\rho_{\star\star}^{(i)}=\rho_{\star}^{(i)} - \Dt a_{ii}\eps^2\nabla_{CW}\cdot \bq_E^{(i)},
			\\
			&\bq_{\star\star}^{(i)} = \bq_{\star}^{(i)}   - \Dt a_{ii}
			\left(\nabla^{WB}_{CW}\cdot \left(\frac{\bq \otimes \bq}{\rho} + p \mI\right)_E^{(i)} -
			\frac{\rho_E^{(i)}}{\rho_0}\nabla^{WB}_{CW}p_0
			\right).
		\end{align}
	\end{subequations}
	Here $\rho_{\star}^{(i)}$ and $\bq_{\star}^{(i)}$ are updated from \eqref{S3_E8}.
	For this elliptic equation, all the derivative operators in \eqref{rho2} are discretized with high order central differences, e.g., the compact central difference for the Laplacian operator as in \cite{boscarino2019high}. After obtaining $\rho_{2,I}^{(i)}$, we can update $U_I^{(i)}$ from \eqref{S3_E9} explicitly in a sequential way. The detailed step-by-step flowchart of the fully discrete high order scheme is provided in the supplementary material.
	
	Taking the two dimensional model \eqref{F2d}-\eqref{s2d} described above as an example, these WENO operators $\nabla_{CW}$, $\nabla_{W}$, $\nabla_{UW}$ and $\nabla^{WB}_{CW}$ are defined as follows.
	\begin{itemize}
		\item {\bf{Characteristic-wise WENO reconstruction $\nabla_{CW}$. }} \\
		The traditional characteristic-wise WENO reconstruction is applied to those conservative variables of the explicit part. Let us denote
		$U_{EC} = (\rho, \rho u, \rho v, E)^T_E$ and
		\begin{equation}
			\label{FEC}
			\mathcal{F}_{EC}^x = (\rho u, \rho u^2 + p, \rho uv, (E+P)u)^T_E, \quad
			\mathcal{F}_{EC}^y = (\rho v, \rho uv, \rho v^2 + p, (E+P)v)^T_E,
		\end{equation}
		with
		\begin{equation}
			\label{SEC}
			\mathcal{S}_{EC}^x = (0,-\rho\partial_x\Phi,-\rho u\partial_x\Phi,0)^T_E, \quad \mathcal{S}_{EC}^y = (0,-\rho\partial_y\Phi,-\rho v\partial_y\Phi,0)^T_E.
		\end{equation}
		We note that $\mathcal{F}_{EC}^x$ and $\mathcal{F}_{EC}^y$ are the fluxes of 2D compressible Euler equations in a conservative form. They are exactly the first terms on the right-hand side of \eqref{FEx} and \eqref{FEy} (with a factor $\eps^2$), corresponding to the conserved variable $U_{EC}$. We take $\mathcal{F}_{EC}^x$ and $\mathcal{F}_{EC}^y$ to form our Jacobian matrices and perform a 2D characteristic-wise finite difference WENO reconstruction, as detailed in \cite{jiang1996efficient,shu1998essentially}, to obtain their numerical approximation. The same reconstructed values are also used to evaluate the second terms, e.g., $\partial_x(\rho u^2+p)$ and $ \partial_x(\rho u v)$ in the second term on the right side of \eqref{FEx} (with a factor $(1-\eps^2)$). Such a characteristic-wise WENO reconstruction is denoted by $\nabla_{CW}$. $\nabla_{CW}$ is very important for essentially non-oscillatory shock capturing, compared with component-wise WENO reconstructions, especially for multi-dimensional cases \cite{boscarino2022high}. 
		
		\item {\bf{Component-wise WENO reconstructions $\nabla_{W}$ and $\nabla_{UW}$. }} \\
		When the characteristic direction is unclear, the component-wise finite difference WENO reconstruction is applied to some first-order derivative terms, e.g., the third terms in \eqref{FEx} and \eqref{FEy} and those in \eqref{FIx} and \eqref{FIy}. We refer to \cite{jiang1996efficient,shu1998essentially} for detailed reconstruction procedures. Such reconstructions with a Lax-Friedrichs flux is frequently used. Denoting
		\begin{equation}
			\label{FIC}
			\mathcal{F}_{IC}^x = (\rho u, p_2, 0, (E+p)u)^T_I, \quad
			\mathcal{F}_{IC}^y = (\rho v, 0, p_2, (E+p)v)^T_I,
		\end{equation}
		and taking $\mathcal{F}_{IC}^{x}$ as an example, a Lax-Friedrichs flux splitting is given by
		\begin{equation}
			(\mathcal{F}_{IC}^{x})^{\pm}_{ij} = \frac{1}{2}\Big((\mathcal{F}_{IC}^x)_{ij} \pm \Lambda (U_{EC})_{ij}\Big),
		\end{equation}
		here we take $\Lambda = \max\limits_{\rho, \bu, p} \{|\bu|+\min(1,1/\eps)\sqrt{\frac{\gamma p}{\rho}} \}$ and use $U_{EC}$ to obtain numerical viscosities. For component-wise WENO reconstructions with a Lax-Friedrichs numerical flux, we denote them as $\nabla_{W}$. On the other hand, the component-wise WENO reconstruction with an upwind numerical flux is used in the equation of $\theta_2$, and we denote it as $\nabla_{UW}$, e.g. the $\nabla_{UW}\theta_{2,E}$ term in \eqref{hoh2}. 
		
		\item {\bf{Well-balanced WENO reconstruction. }} \\
		For all Mach Euler equations with static gravitational fields \eqref{FEe2}, it admits an equilibrium steady-state solution \eqref{equistate}. Namely, if the equilibrium \eqref{equistate} holds initially, it will hold for all later times. Numerically, well-balanced schemes are designed in order to preserve such a steady-state solution on the discrete level. Here we follow the approach proposed in \cite{xing2013high,li2018well} to construct well-balanced WENO approximation. The main idea is to use exactly the same reconstructions to the flux and source terms, and revise the numerical viscosity to a prebalanced form so that the numerical viscosities vanish for equilibrium solutions. For the system \eqref{FEe2}, an equilibrium static solution would have $\rho=\rho_0(\bx)$ and $p=p_0(\bx)$, which satisfies
		\begin{equation}
			\label{ES3}
			\nabla p_0 = -\rho_0\nabla\Phi.
		\end{equation}
		This is consistent with the asymptotic limit \eqref{ES2}, namely, the equilibrium solution is also a solution to \eqref{FEe_LE1} \cite{bispen2017asymptotic}. Here we still take $\partial_x\mathcal{F}^x_{EC}$ as an example to demonstrate how such well-balanced WENO scheme is designed. First, a well-balanced Lax-Friedrichs flux splitting is based on
		\begin{equation}
			\label{fixsplit}
			(\mathcal{F}_{EC}^{x})^{\pm}_{ij} = \frac{1}{2}\Big((\mathcal{F}_{EC}^x)_{ij} \pm \Lambda (\tilde{U}_{EC})_{ij}\Big),
		\end{equation}
		where $\tilde{U}_{EC} = (\rho - \rho_0, \rho\bu, E -\frac{p_0}{\gamma-1})^T_E$. For the source term \eqref{s2d}, we have
		$\nabla \Phi = -\nabla p_0/\rho_0$ from \eqref{ES3},
		and can rewrite \eqref{s2d} as
		\begin{equation}
			\mathcal{S}_{EC}^x = (0,-\frac{\rho}{\rho_0}\partial_x p_0,\rho u\partial_x \Phi,0)^T, \quad \mathcal{S}_{EC}^y = (0,-\frac{\rho}{\rho_0}\partial_y p_0,0,\rho v\partial_y \Phi)^T.
		\end{equation}
		To approximate $\mathcal{S}_{EC}^x$ numerically, in analogous to \eqref{fixsplit}, we split $p_0$ as 
		\begin{equation}
			(p_0)^\pm_{ij} =\frac{1}{2}(p_0)_{ij}.
		\end{equation}
		The finite difference characteristic-wise WENO reconstructions used to approximate $\{(\mathcal{F}_{EC}^{x})^{\pm}_{ij}\}$ will then be applied to $\{(p_0)^\pm_{ij}\}$ with exactly the same weights (evaluated based on the smoothness indicators of $\{(\mathcal{F}_{EC}^{x})^{\pm}_{ij}\}$). In this way, we can obtain the exact balance of these two terms on the discrete level, when the equilibrium state is reached. We refer to \cite{xing2013high,li2018well} for more detailed discussions. Similar approximations of $\partial_y\mathcal{F}^y_{EC}$ and $\mathcal{S}_{EC}^y$ can be applied. For those terms with above described well-balanced WENO reconstructions, we denote them as $\nabla_{CW}^{WB}$. 
		
	\end{itemize}
	

	\section{Asymptotic preserving and asymptotically accurate properties}
	\label{sec4}
	\setcounter{equation}{0}
	\setcounter{figure}{0}
	\setcounter{table}{0}
	
	In this section, we will prove that our first order IMEX scheme \eqref{IMEX1} is AP and the high order IMEX-RK scheme in Section \ref{sec3.3} is AA. 
	For any variable $w$, let us denote $w^n(\bx)=w(\bx,t^n)$. During the discussion, we focus on time discretization and keep the space continuous. 
	The initial solutions are assumed to be well-prepared, namely, the following expansions 
	\begin{subequations}
		\label{wp1}
		\begin{align}
			&\rho^n(\bx)=\rho^n_0(\bx)+\eps^2 \rho^n_2(\bx), \\
			&(\rho\bu)^n(\bx)=(\rho\bu)^n_0(\bx)+\eps (\rho\bu)^n_1(\bx), \\
			& p^n(\bx)=p^n_0(\bx)+\eps^2 p^n_2(\bx),
		\end{align}
	\end{subequations}
	hold at $n=0$. The relation $p=(\rho\theta)^\gamma$ yields
	\begin{equation}
		\label{wp2}
		\theta^n(\bx)=\theta^n_0(\bx)+\eps^2 \theta^n_2(\bx).
	\end{equation}
	In addition, the initial conditions should satisfy 
	\begin{equation}
		\label{RL}
		\nabla\cdot(\rho\bu)_0^0 = 0, \quad \nabla p^0_0 = -\rho_0^0 \nabla \Phi.
	\end{equation}
	\subsection{Asymptotic preserving}
	We start with proving the AP property of the first order IMEX scheme \eqref{IMEX1}. 
	\begin{thm}
		\label{thm41}
		With well-prepared initial conditions \eqref{wp1}-\eqref{wp2} at $n=0$ which satisfy \eqref{RL}, and assuming the expansion \eqref{wp1}-\eqref{wp2} at all later times, the first order IMEX scheme \eqref{IMEX1} with \eqref{eqr2p2}, under the assumptions of \eqref{pr}, $\nabla p_0=-\rho_0\nabla \Phi$ and $\theta_0$ is a constant, is asymptotic preserving, namely, as $\eps\rightarrow 0$, the limiting scheme of \eqref{IMEX1} is a consistent discretization of \eqref{FEe_LE1}. 
	\end{thm}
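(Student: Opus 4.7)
The plan is to substitute the assumed expansions \eqref{wp1}--\eqref{wp2} into each equation of the scheme \eqref{IMEX1} (together with the elliptic reformulation \eqref{eqr2p2}), collect terms by powers of $\eps$, and identify the $\eps\rightarrow 0$ limit with a time discretization of \eqref{FEe_LE1}. A short induction on $n$ will carry the hydrostatic constraint and the divergence-free condition forward.

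First I would examine the momentum equation \eqref{IMEX1:b}. Expanding $p$ and $\rho$ via \eqref{wp1} and matching the $\mathcal{O}(1/\eps^2)$ terms gives $\nabla p_0^{n+1}=-\rho_0^{n+1}\nabla\Phi$. Under the standing hypothesis $\nabla p_0=-\rho_0\nabla\Phi$ (cf. \eqref{ES2}, \eqref{pr}), this is automatically satisfied, so the leading singular contributions cancel identically. The surviving $\mathcal{O}(1)$ part then reads
\begin{equation*}
\frac{(\rho_0\bu_0)^{n+1}-(\rho_0\bu_0)^n}{\Dt}+\nabla\cdot(\rho_0\bu_0\otimes\bu_0)^n+\nabla p_2^{n+1}=-\rho_2^{n+1}\nabla\Phi,
\end{equation*}
which is precisely a semi-implicit discretization of the momentum equation in \eqref{FEe_LE1}. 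Next I would turn to the $\theta_2$ equation \eqref{IMEX1:d}; since $\theta_0$ is constant, the $\mathcal{O}(1/\eps^2)$ term $\bu^n\cdot\nabla\theta_0$ vanishes, and the leading balance becomes $(\theta_2^{n+1}-\theta_2^n)/\Dt+\bu_0^n\cdot\nabla\theta_2^n=0$, matching the third equation of \eqref{FEe_LE1}.

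For the density equation \eqref{IMEX1:a} I would expand and use the induction hypothesis $\nabla\cdot(\rho_0\bu_0)^n=0$ (which holds at $n=0$ by \eqref{RL}). The $\mathcal{O}(1)$ balance then forces $\nabla\cdot(\rho_0\bu_0)^{n+1}=0$, closing the induction on divergence-freeness. To confirm that this is consistent with the elliptic update \eqref{S3_10}, I would plug \eqref{wp1}--\eqref{wp2} into \eqref{eqr2p2} and collect the $\mathcal{O}(1)$ contribution: the linearized relation \eqref{p2} reduces to the Ogura--Phillips closure \eqref{C1} in the limit, and the resulting elliptic equation for $\rho_2^{n+1}$ (equivalently $p_2^{n+1}$) becomes the discrete pressure-Poisson equation associated with projecting onto the divergence-free constraint. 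The energy equation \eqref{IMEX1:c} in this limit reduces to a consistent update of the hydrostatic pressure $p_0$, which is already preserved by the hydrostatic relation, so it imposes no additional constraint.

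The main obstacle I anticipate is the careful bookkeeping at the density and elliptic levels: one must verify that the $\mathcal{O}(1)$ expansion of \eqref{S3_10} is genuinely equivalent, modulo the induction hypothesis $\nabla\cdot(\rho_0\bu_0)^n=0$, to the discrete incompressibility constraint at time $n+1$, and in particular that the linearization \eqref{p2} of the nonlinear equation of state introduces only $\mathcal{O}(\eps^2)$ errors that do not pollute the limit. Once this is established, combining the four leading-order equations yields exactly a consistent first order (in time) discretization of \eqref{FEe_LE1}, completing the proof.
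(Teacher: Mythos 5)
Your overall strategy (substitute the expansions, match powers of $\eps$, induct on $n$ carrying the hydrostatic and divergence-free constraints) is the same as the paper's, and your reading of the momentum and $\theta_2$ equations is correct. However, there are two concrete missteps in the logical chain.

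First, the density equation does \emph{not} force $\nabla\cdot(\rho\bu)_0^{n+1}=0$. Its $\mathcal{O}(1)$ limit is $(\rho_0^{n+1}-\rho_0^n)/\Dt+\nabla\cdot(\rho\bu)_0^{n+1}=0$ (the explicit flux carries an $\eps^2$ factor and drops out, so your induction hypothesis $\nabla\cdot(\rho\bu)_0^{n}=0$ never enters this equation), and this single relation has two unknowns; concluding divergence-freeness from it presupposes $\rho_0^{n+1}=\rho_0^n$, which is exactly what you have not yet established --- the argument as written is circular. The paper's order of operations is the reverse of yours: take the divergence of the $\mathcal{O}(1)$ momentum update to get $\nabla\cdot(\rho\bu)_0^{n+1}/\Dt+\nabla\cdot(\nabla\cdot(\rho\bu\otimes\bu)_0^n)+\nabla\cdot(\nabla p_2^{n+1}+\rho_2^{n+1}\nabla\Phi)=0$, observe that the $\eps\to 0$ limit of the elliptic equation \eqref{eqr2p2} says precisely $\nabla\cdot(\nabla p_2^{n+1}+\rho_2^{n+1}\nabla\Phi)=-\Dt^{-0}\,\nabla\cdot(\nabla\cdot(\rho\bu\otimes\bu)_0^n)$ (after the $\Dt^2$ factors cancel against the $\Dt$ in the momentum update), substitute to obtain $\nabla\cdot(\rho\bu)_0^{n+1}=0$, and only \emph{then} read off $\rho_0^{n+1}=\rho_0^n$ from the density equation. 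You do gesture at the elliptic equation being a discrete pressure-Poisson projection, which is the right intuition, but the deduction must run through it rather than through the density equation.

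Second, you dismiss the energy equation as imposing ``no additional constraint,'' but it is the equation that actually updates $p_0^{n+1}$ in the scheme, and the induction needs $\nabla p_0^{n+1}=-\rho_0^{n+1}\nabla\Phi$ to hold at the next step. The paper verifies this by a genuine computation: with $E_0=p_0/(\gamma-1)$, $p_0=C\rho_0^\gamma$ (from constant $\theta_0$), and the just-established $\nabla\cdot(\rho\bu)_0^{n+1}=0$, one shows $\nabla\cdot\big(\tfrac{\gamma p_0}{(\gamma-1)\rho_0}(\rho\bu)_0^{n+1}\big)=C\gamma(\rho_0)^{\gamma-2}(\rho\bu)_0^{n+1}\cdot\nabla\rho_0$, which exactly equals $-(\rho\bu)_0^{n+1}\cdot\nabla\Phi$, so $p_0^{n+1}=p_0^n$ and the hydrostatic relation propagates. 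Without this cancellation the induction does not close, so it should be carried out rather than asserted.
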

	
	\begin{proof}
		As $\eps\rightarrow 0$, under the assumptions that $\nabla p_0=-\rho_0\nabla \Phi$, $\theta_0$ is a constant, the scheme \eqref{IMEX1} becomes 
		\begin{subequations}
			\label{IMEX-RK1_LM}
			\begin{align}
				\label{S4_4}
				&\frac{\rho_0^{n+1} - \rho_0^n}{\Dt} + \nabla\cdot(\rho\bu)_0^{n+1} = 0,
				\\
				\label{S4_5}
				&\frac{(\rho\bu)_0^{n+1} - (\rho \bu)_0^n}{\Dt}+\nabla \cdot(\rho \bu \otimes \bu+p\mI)_0^n+\nabla p^{n+1}_2 =  - \rho^{n}_0 \nabla\Phi- \rho^{n+1}_2 \nabla\Phi,
				\\
				\label{S4_6}
				&\frac{E_0^{n+1} - E_0^n}{\Dt}
				+\nabla\cdot \left((E_0+p_0)\bu_0\right)^{n+1}  = -\left(\rho\bu\right)_0^{n+1}\cdot \nabla\Phi,
				\\
				\label{S4_8}
				&\frac{\theta_2^{n+1} - \theta_2^n}{\Dt}+ \bu_0^n\cdot\nabla\theta_2^n =0, 
			\end{align}
		\end{subequations}
		and the EOS \eqref{EOS2} reduces to (at the time levels $t^n$ and $t^{n+1}$)
		\begin{equation}
			\label{S4_7}
			E_0^{n} = \frac{p_0^{n}}{\gamma-1},  \qquad E_0^{n+1} = \frac{p_0^{n+1}}{\gamma-1}.
		\end{equation}
		
		We now prove that starting from the well-prepared initial conditions \eqref{wp1}-\eqref{RL} at $n=0$, the limiting scheme \eqref{IMEX-RK1_LM} is a consistent discretization to \eqref{FEe_LE1} after one time step. First of all, due to $\nabla p_0^0=-\rho_0^0\nabla \Phi$, \eqref{S4_5} becomes
		\begin{equation}
			\label{rhou0}
			\frac{(\rho\bu)_0^{1} - (\rho \bu)_0^0}{\Dt}+\nabla \cdot(\rho \bu \otimes \bu)_0^0+\nabla p^{1}_2 = - \rho^{1}_2 \nabla\Phi.
		\end{equation}
		Taking divergence on both sides of \eqref{rhou0} and using $\nabla\cdot(\rho\bu)^0_0=0$, we have
		\begin{equation}
			\label{rhou}
			\frac{\nabla\cdot(\rho\bu)_0^{1}}{\Dt}+\nabla\cdot(\nabla \cdot(\rho \bu \otimes \bu)_0^0)+\nabla\cdot(\nabla p^{1}_2 + \rho^{1}_2 \nabla\Phi)=0.
		\end{equation}
		As $\eps\rightarrow 0$, the elliptic equation \eqref{eqr2p2} becomes
		\begin{equation}
			\label{eqr2p2_2}
			0 = \Dt^2\,\nabla \cdot(\nabla\cdot (\rho\bu\otimes\bu)^0_0) + \Dt^2\nabla\cdot\left(\nabla p_2^{1} +  \rho_2^{1}\nabla\Phi\right),
		\end{equation}
		since $\widetilde{\rho} - (1-\eps^2)\,\Dt\,\nabla \cdot (\widetilde{\rho\bu})-\rho^0_0$ in \eqref{eqr2p2} approaches $\Dt^2\,\nabla \cdot(\nabla\cdot (\rho\bu\otimes\bu)^0_0)$, due to \eqref{Ustara} and \eqref{Ustarb}. 
		Substituting \eqref{eqr2p2_2} into \eqref{rhou} leads to
		\begin{equation}
			\label{rhou1}
			\nabla \cdot (\rho\bu)_0^{1} = 0,
		\end{equation}
		and from \eqref{S4_4}, we obtain $\rho^1_0=\rho^0_0$. 
		
		Utilizing \eqref{S4_7}, the energy equation \eqref{S4_6} becomes
		\begin{equation}
			\label{p0}
			\frac{1}{\gamma-1}\frac{p_0^{1} - p_0^0}{\Dt}
			+\nabla\cdot \left(\frac{\gamma\,p^1_0}{(\gamma-1) \rho^1_0}(\rho \bu)^1_0\right)  = -\left(\rho\bu\right)_0^{1}\cdot \nabla\Phi.
		\end{equation}
		If we assume a constant background potential temperature $\theta_0$ as in Section 2, from $p=(\rho\theta)^\gamma$, we have $p_0=C\,\rho^\gamma_0$ with $C=\theta_0^\gamma$, which leads to
		\begin{equation}
			\nabla\left(\frac{p^1_0}{\rho_0^1}\right)=C\nabla (\rho^1_0)^{(\gamma-1)}=C(\gamma-1)(\rho^1_0)^{\gamma-2}\nabla\rho_0^1,
		\end{equation}
		and using \eqref{rhou1}, we have
		\begin{align}
			\nabla\cdot \left(\frac{\gamma\,p^1_0}{(\gamma-1) \rho^1_0}(\rho \bu)^1_0\right)&=\frac{\gamma}{\gamma-1}\left((\rho \bu)^1_0\cdot\nabla\left(\frac{ p^1_0}{\rho^1_0}\right)+\frac{ p^1_0}{\rho^1_0}\nabla\cdot(\rho \bu)^1_0\right) \\
			&=C\gamma(\rho^1_0)^{\gamma-2}(\rho \bu)^1_0\cdot\nabla\rho_0^1. \notag
		\end{align}
		And $\nabla p^1_0=-\rho^1_0\nabla\Phi$ yields
		\begin{equation}
			-\left(\rho\bu\right)_0^{1}\cdot \nabla\Phi=(\rho \bu)^1_0\cdot\frac{\nabla p^1_0}{\rho^1_0}=C\gamma(\rho^1_0)^{\gamma-2}(\rho \bu)^1_0\cdot\nabla\rho_0^1,
		\end{equation}
		therefore, the equation \eqref{p0} leads to $p^1_0=p^0_0$. Note that the update of density equation in \eqref{S4_4} yields $\rho^1_0=\rho^0_0$, and the update of energy equation in \eqref{S4_6} yields $p^1_0=p^0_0$, which is consistent with the assumption $\nabla p_0^1 = -\rho^1_0\nabla \Phi$ at the next time level.
		
		From \eqref{rhou1}, \eqref{rhou0} and \eqref{S4_8}, we observe that the limiting scheme \eqref{IMEX-RK1_LM} is a consistent discretization to \eqref{FEe_LE1} at the next time level, as well as $p^1_0=p^0_0$ and $\rho^1_0=\rho^0_0$. By the mathematical induction, we can prove that the same conclusion holds for all later time levels in a similar way, under the assumptions of the expansion \eqref{wp1}-\eqref{wp2} at any later time level $t^n$, which confirms the AP property of our scheme.
	\end{proof}
	
	\subsection{Asymptotically accurate}
	The AP property can ensure the first-order scheme \eqref{IMEX1} to be a consistent discretization of the limiting equation \eqref{FEe_LE1} when $\eps\rightarrow 0$. The high order scheme \eqref{S3_E7}-\eqref{S3_E15} is named AA \cite{pareschi2005implicit,boscarino2019high,boscarino2022high}, if it is AP and preserves the high order temporal accuracy as $\eps$ goes to zero.
	
	
	\begin{thm}
		\label{thm42}
		Consider the high order semi-implicit IMEX-RK scheme \eqref{S3_E7}-\eqref{S3_E15} with space continuous, and use a $k$-th order stiffly accurate IMEX-RK scheme \eqref{DBT}. Under the same assumptions for the AP property, for the solutions after one time step, we have
		\begin{equation*}
			\lim_{\eps \to 0} \theta^1(\bx;\eps)=\theta_0, \, \lim_{\eps \to 0} \nabla \cdot \Big(\rho^1(\bx;\eps)\bu^1({\bx};\eps)\Big)=0,
			\,
			\lim_{\eps \to 0} (\nabla p^1(\bx;\eps)+\rho^1(\bx;\eps)\nabla\Phi)=0.
		\end{equation*}
		If we denote 
		${\bf V}^1({\bf x};\eps)=\big(\rho^1(\bx;\eps),\rho^1(\bx;\eps)\bu^1(\bx;\eps),p^1(\bx;\eps), \theta_2^1(\bx;\eps)\big)
		$
		and let 
		$\bV^{e}(\bx,t)=\big(\rho^{e}(\bx,t),\rho^{e}(\bx,t)\bu^{e}(\bx,t),p^{e}(\bx,t),\theta_2^{e}(\bx,t)\big)$
		be the exact solutions of \eqref{FEe_LE1}, one has the one-step error estimate
		\begin{equation}
			\label{Prop2}
			\lim_{\eps\to 0} {\bf V}^1({\bf x};\eps)={\bf V}^{e}({\bf x}, \Delta t)+\mathcal{O}(\Delta t^{k + 1}),
		\end{equation}
		namely, the high order semi-implicit scheme is AA.
	\end{thm}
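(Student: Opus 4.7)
The plan is to extend the stage-wise induction from the proof of Theorem \ref{thm41} to every internal stage of the IMEX-RK method, and then invoke the stiffly accurate property together with classical order theory for Runge-Kutta schemes applied to the limiting system \eqref{FEe_LE1}.

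First, I would prove by induction on the stage index $i = 1, \dots, s$ that, under the assumed well-prepared expansion \eqref{wp1}--\eqref{wp2} at time $t^n$ and the assumption that $\theta_0$ is constant with $\nabla p_0 = -\rho_0 \nabla \Phi$, the stage values admit expansions $\rho^{(i)} = \rho_0 + \eps^2 \rho_2^{(i)} + \cdots$, $p^{(i)} = p_0 + \eps^2 p_2^{(i)} + \cdots$, and $\theta^{(i)} = \theta_0 + \eps^2 \theta_2^{(i)} + \cdots$, and that the divergence constraint $\nabla \cdot (\rho \bu)_0^{(i)} = 0$ holds. The inductive step mirrors the derivation used for $i=1$ in Theorem \ref{thm41}: taking $\eps \to 0$ in the stage-$i$ elliptic equation \eqref{rho2}, its left-hand side vanishes and it reduces to a Poisson-type relation between $p_2^{(i)}$, $\rho_2^{(i)}$ and the explicit convective contributions from previous stages; substituting this into the divergence of the implicit momentum update \eqref{S3_E9} for $(\rho \bu)_I^{(i)}$ yields $\nabla \cdot (\rho\bu)_0^{(i)} = 0$. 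The implicit density update then forces $\rho_0^{(i)} = \rho_0$, and the implicit energy update combined with the EOS and $p_0 = C\rho_0^\gamma$ forces $p_0^{(i)} = p_0$, exactly as in the single-step argument. The explicit stage \eqref{S3_E7} preserves these expansions automatically because it only applies the spatial operator to already well-prepared stage values.

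Second, I would use the stiffly accurate property $b^T = \be_s^T A$ (which gives $U^{n+1} = U_I^{(s)}$) to transport the limit identities from the final implicit stage to the one-step solution, yielding the three limits $\theta^1 \to \theta_0$, $\nabla\cdot(\rho^1\bu^1) \to 0$, and $\nabla p^1 + \rho^1 \nabla \Phi \to 0$ as stated. For the accuracy claim \eqref{Prop2}, the key observation is that the entire collection of stage equations, taken in the limit $\eps \to 0$, is nothing but the same IMEX-RK scheme with Butcher tableaux \eqref{DBT} applied to the limiting incompressible system \eqref{FEe_LE1}: the explicit tableau handles the nonlinear material-wave terms $\nabla\cdot(\rho_0\bu_0\otimes\bu_0)$ and $\bu_0 \cdot \nabla\theta_2$, while the implicit tableau handles the acoustic/gravity pressure-velocity coupling $(\nabla p_2 + \rho_2\nabla\Phi, \nabla\cdot(\rho_0\bu_0))$. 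Since this limiting scheme has classical order $k$, the standard local truncation error analysis for partitioned Runge-Kutta methods gives the one-step error $\mathcal{O}(\Delta t^{k+1})$ against the exact solution $\bV^e(\bx, \Delta t)$ of \eqref{FEe_LE1}.

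The main obstacle I expect is carefully controlling the linearization error introduced by replacing \eqref{p2a} with the simplified relation \eqref{p2}. The difference between these two definitions of $p_2^{(i)}$ is formally $\mathcal{O}(\eps^2)$, but in the AA analysis one must verify that this approximation does not pollute the $\mathcal{O}(\Delta t^{k+1})$ local truncation error as $\eps \to 0$; this requires showing that the discarded higher-order terms in the Taylor expansion of $(\rho\theta)^\gamma$ around $(\rho_0,\theta_0)$ contribute only through powers of $\eps^2$ that vanish in the limit. A secondary technical point is verifying the induction base case carefully, since the well-balanced reconstruction operator $\nabla^{WB}_{CW}$ and the implicit operators $\nabla_W$ interact with the hydrostatic decomposition \eqref{pr} in a way that must exactly cancel at leading order for the elliptic equation \eqref{rho2} to degenerate correctly as $\eps \to 0$. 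Once these two points are handled, the remainder of the proof is a systematic stage-by-stage bookkeeping exercise following the template of Theorem \ref{thm41}, and I would relegate the full step-by-step verification to the supplementary material as the authors indicate.
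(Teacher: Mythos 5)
Your proposal follows essentially the same route as the paper: a stage-by-stage induction replicating the AP argument of Theorem \ref{thm41} at each internal IMEX-RK stage, followed by the observation that the limiting scheme is the same stiffly accurate IMEX-RK method applied to \eqref{FEe_LE1}, so classical order theory yields the $\mathcal{O}(\Delta t^{k+1})$ one-step error; this is precisely the "mathematical induction and similar derivations of the AP analysis" the authors relegate to the supplementary material. Your two flagged obstacles are handled as you anticipate (the linearization error in \eqref{p2} is $\mathcal{O}(\eps^2)$ and vanishes in the limit, and the spatial-operator concern is moot since the theorem is stated with space kept continuous), so no genuine gap remains.
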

	
	The proof follows from the mathematical induction and similar derivations of the AP analysis. A detailed proof is provided in the supplementary material. 

	\section{Numerical tests}
	\label{sec5}
	\setcounter{equation}{0}
	\setcounter{figure}{0}
	\setcounter{table}{0}
	
	In this section, we will perform some numerical tests for the all-Mach full Euler equations with gravity, where the Mach number ranges from $0$ to $\mO(1)$. For one-dimensional (1D) problems, we consider the vertical direction with coordinate denoted by $x$. For two-dimensional (2D) problems, we use the coordinate $(x,y)$ with $y$ being the vertical direction, unless specified otherwise.
	
	The proposed method was outlined in Section \ref{sec3.4}. The third order SA IMEX-RK scheme in \cite[Section 3.2.2]{boscarino2022high} will be used as temporal discretization, 
	and the fifth order finite difference WENO scheme \cite{shu1998essentially} is adopted in space. The time step is taken to be $\Delta t = \text{CFL}\,\Delta x/\Lambda$ with  $\Lambda = \max \{|\bu|+\min(1,1/\eps)\sqrt{{\gamma p}/{\rho}}\}$,
	and we set $\text{CFL}=0.2$. Our proposed scheme is denoted as the ``IMEX'' scheme in the following. We will compare our scheme to an explicit fifth order well-balanced finite difference WENO scheme, as was developed in \cite{li2018well}, which is denoted as ``WB-Xing''. We will demonstrate that our approach is high order accurate, AP and AA, as well as that it is more computationally efficient as compared to WB-Xing in the low Mach regime.
	
	\subsection{1D accuracy test}
	\label{exam1}
	We first consider the following 1D example with a linear gravitational field $\Phi_x=1$. The hydrostatic steady state satisfying \eqref{ES2} is given by 
	\begin{equation}
		\label{steady_state1}
		\rho_0(x) = \left(1-\frac{\gamma -1}{\gamma}x\right)^{\frac{1}{\gamma-1}}, \quad
		p_0(x)    = \left(1-\frac{\gamma -1}{\gamma}x\right)^{\frac{\gamma}{\gamma-1}}, \quad
		\theta_0(x) = 1,
	\end{equation}
	with the adiabatic index $\gamma=1.4$. The corresponding perturbations are also at a steady state, where
	\begin{equation*}
		\rho_2(x) = 1 + 0.2\sin(\pi x ),\qquad
		p_2(x) = 4.5 - x + 0.2\cos(\pi x)/\pi,
	\end{equation*}
	so that we have steady-state well-prepared solutions for \eqref{S3_1}
	\begin{equation*}
		\rho(x) = \rho_0(x) + \eps^2\rho_2(x),
		u(x)=0, 
		p(x) = p_0(x) + \eps^2p_2(x),
		\theta_2(x) = \frac{1}{\rho}\left(\frac{\rho_0\theta_0}{\gamma p_0}p_2 - \rho_2\theta_0\right).
	\end{equation*}
	Here the computational domain is $\Omega=[0,2]$. We divide $\Omega$ into $N$ uniform cells, where $N=8\times2^i \,(i=1,2,\ldots,5)$. The errors are computed by comparing the numerical solution to the exact solution at a final time $T=0.1$. We show the $L_1$ errors and orders for the density $\rho$ in Table~\ref{T_eg1_1}, by taking four different global Mach numbers $\eps=1,  10^{-2}, 10^{-4}$ and $0$. We observe that our scheme has at least fifth order accuracy for all $\eps$'s, as the spatial error is dominant and the solutions reach the steady state. The convergence orders of $\rho u$ and $E$ are similar, and we omit them to save space.
	
	\renewcommand{\multirowsetup}{\centering}
	\begin{table}[htbp]
		\caption{ Example \ref{exam1}. $L_1$ errors and orders of $\rho$  with $\eps=1, 10^{-2}, 10^{-4}, 0$. $T=0.1$.}
		\begin{center}
			\begin{tabular}{c|c|c|c|c|c|c|c|c}
				\hline
				\multicolumn{1}{c|}{\multirow{2}*{N}}&\multicolumn{2}{c|}{ $\eps=1$}&\multicolumn{2}{c|}{$\eps=10^{-2}$}&\multicolumn{2}{c|}{$\eps=10^{-4}$} &\multicolumn{2}{c}{$\eps=0$}\\
				\cline{2-9}
				\multicolumn{1}{c|}{} &error& order& error&order&error& order&error& order \\  \cline{1-9}
				16 &     9.19E-05 &     --&     1.20E-08 &     --&     1.33E-08 &     --&      1.33E-08 &     --\\
				32 &     2.94E-06 &     4.96&     2.94E-10 &     5.35&     9.63E-11 &     7.11&     9.65E-11 &     7.11 \\
				64 &     8.90E-08 &     5.05&     5.80E-12 &     5.66&     4.25E-12 &     4.50&     4.25E-12 &     4.51  \\
				128 &     2.74E-09 &     5.02&     8.99E-14 &     6.01&     6.03E-14 &     6.14&     6.04E-14 &     6.14   \\
				256 &     8.52E-11 &     5.01&     1.64E-15 &     5.77&     7.30E-16 &     6.37&     7.33E-16 &     6.36  \\
				\hline
			\end{tabular}
		\end{center}
		\label{T_eg1_1}
	\end{table}

	\subsection{1D shock tube problem}
	\label{1D_shock}
	In this example, we consider a 1D shock tube problem in a high Mach regime where we take $\eps=0.9$, and the initial conditions are given by
	\begin{equation*}
		(\rho,u,p) = (1,0,1),\quad \text{ if } x<0.5; \qquad (0.125,0,0.1), \quad \text{ otherwise; }
	\end{equation*}
	which has been studied in \cite{boscarino2022high} without the gravitational source. Here we consider a linear gravity with $\Phi_x =1$. The background steady state is still \eqref{steady_state1}. The computational domain is $[0,1]$, with inflow and outflow boundary conditions on the left and right boundaries respectively. We take $N=200$ and show the results at $T=0.1$ in Fig.~{\ref{Fig_1D_shock}}. We can observe that the IMEX numerical solutions match those of WB-Xing very well, and both schemes work well in the high Mach regime when the shock exists.
	
	\begin{figure}[hbtp]
		\begin{center}
			{\includegraphics[width=5.5cm]{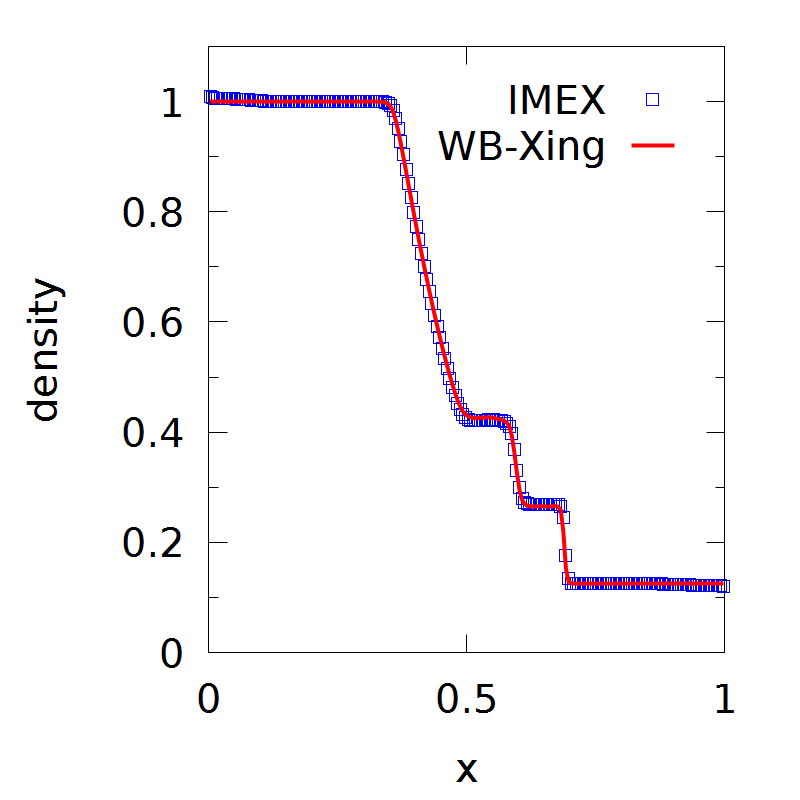}}
			{\includegraphics[width=5.5cm]{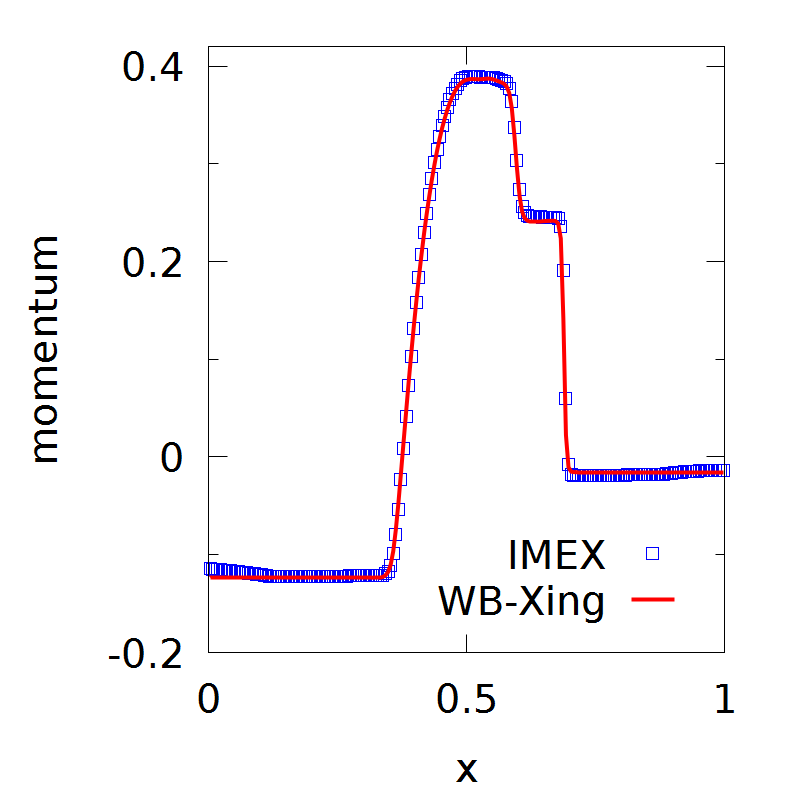}}
			{\includegraphics[width=5.5cm]{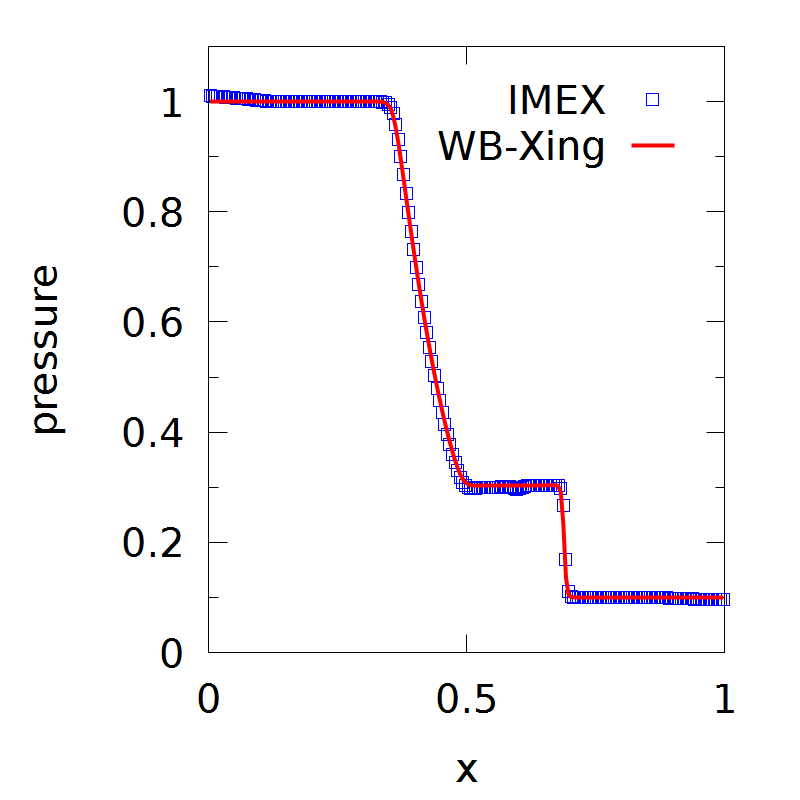}}
			\caption{ Example~\ref{1D_shock}. Numerical solutions of $\rho$, $\rho u$ and $p$ with $N=200$ at $T=0.1$. }
			\label{Fig_1D_shock}
		\end{center}
	\end{figure}
	
	\subsection{2D accuracy test}
	\label{exam3}
	In this example, we consider a smooth 2D problem with a linear gravitational field $(\Phi_x,\Phi_y) = (1,1)$. The steady state is given by
	\begin{equation*}
		\rho_0(x,y) = \left(1-\frac{\gamma -1}{\gamma}(x+y-2)\right)^{\frac{1}{\gamma-1}}, \,\,
		p_0(x,y)    = \left(1-\frac{\gamma -1}{\gamma}(x+y-2)\right)^{\frac{\gamma}{\gamma-1}}, 
	\end{equation*}
	with $\theta_0(x,y)=1$ and the adiabatic index $\gamma=1.4$. Similar to Example \ref{exam1}, the perturbations are also at a steady state
	\begin{equation*}
		\rho_2(x,y) = 1 + 0.2\sin(\pi (x + y) ),\quad
		p_2(x,y) = 4.5 - x + 0.2\cos(\pi (x +y))/\pi,
	\end{equation*}
	so that the steady-state exact solutions are
	\begin{equation*}
		\rho(x,y) = \rho_0(x,y) + \eps^2\rho_2(x,y),
		p(x,y) = p_0(x,y) + \eps^2p_2(x,y),
		\theta_2 =\frac{1}{\rho}\left(\frac{\rho_0\theta_0}{\gamma p_0}p_2 - \rho_2\theta_0\right).
	\end{equation*}
	Here $\bu= (u_0,-u_0)$ which is perpendicular to the gravitational field, and we take $u_0=1$. The computational domain is $\Omega  = [0, 2]^2$, which is divided into $N^2$ uniform cells, with $N=8\times2^i\,(i=1,2,\ldots,5)$. The errors are computed by comparing the numerical solution to the exact solution at the final time $T=0.5$. Similarly we show the $L_1$ errors and orders of $\rho$ in Tables~\ref{T_eg3_1}, with four different global Mach numbers $\eps=1,  10^{-2}, 10^{-4}$ and $0$. We can observe the expected convergence rates for all $\eps$'s, similar to the 1D case.
	
	\renewcommand{\multirowsetup}{\centering}
	\begin{table}[htbp]
		\caption{ Example \ref{exam3}. $L_1$ errors and orders of $\rho$ with $\eps=1, 10^{-2}, 10^{-4}, 0$. $T=0.05$. $N^2$ uniform cells.}
		\begin{center}
			\begin{tabular}{c|c|c|c|c|c|c|c|c}
				\hline	
				\multicolumn{1}{c|}{\multirow{2}*{$N$}}&\multicolumn{2}{c|}{ $\eps=1$}&\multicolumn{2}{c|}{$\eps=10^{-2}$}&\multicolumn{2}{c|}{$\eps=10^{-4}$} &\multicolumn{2}{c}{$\eps=0$}\\
				\cline{2-9}
				\multicolumn{1}{c|}{} &error& order& error&order&error& order&error& order \\  \cline{1-9}
				$16$ &     4.86E-04 &       --&     1.83E-06 &       --&     1.83E-06 &       --&     1.83E-06 &       --\\
				$32$ &     3.70E-05 &     3.72&     8.17E-08 &     4.49&     8.14E-08 &     4.49&     8.14E-08 &     4.49\\
				$64$ &     1.48E-06 &     4.65&     2.32E-09 &     5.14&     2.31E-09 &     5.14&     2.31E-09 &     5.14 \\
				$128$&     5.51E-08 &     4.74&     5.48E-11 &     5.41&     5.41E-11 &     5.42&     5.41E-11 &     5.42  \\
				$256$&     1.38E-09 &     5.32&     1.09E-12 &     5.66&     1.06E-12 &     5.67&     1.06E-12 &     5.67
				\\\hline
				
			\end{tabular}
		\end{center}
		\label{T_eg3_1}
	\end{table}

	\subsection{Traveling vortex}
	\label{2D_vortex} This test has been considered for the 2D shallow water equations \cite{huang2022high}. 
	In \eqref{S3_1}, if we assume $\theta=1$ and take $c_p = 2, c_v=1$, then the specific gas constant is $R=c_p-c_v=1$, and $\gamma=c_p/c_v=2$, it is equivalent to the shallow water equations and in this case the pressure is $p=\rho^2/2$ with $\eps=0.05$. Following the settings in \cite{huang2022high}, for the source term we take 
	\begin{equation*}
		\Phi(x,y) = e^{-5(x-1)^2},
	\end{equation*}
	namely, the gravity is along the $x$-direction for this example. 
	Other initial conditions are given by
	\begin{align*}
		&\rho(x,y,0) = 110  - \Phi +
		\left\{
		\begin{aligned}
			&\left(\frac{\eps \Gamma}{\omega}\right)^2(k(\omega\Gamma_c) - k(\pi)), \quad & \text{if }  \omega\Gamma_c \le\pi;\\
			&0, \quad & \text{otherwise},
		\end{aligned}
		\right.
		\\
		&u(x,y,0) = 2+
		\left\{
		\begin{aligned}
			&\Gamma(1+\cos(\omega\Gamma_c))(0.5-y), \quad & \text{if }  \omega\Gamma_c \le\pi;\\
			&0, \quad & \text{otherwise},
		\end{aligned}
		\right.
		\\
		&v(x,y,0) =
		\left\{
		\begin{aligned}
			&\Gamma(1+\cos(\omega\Gamma_c))(x-0.5), \quad & \text{if }  \omega\Gamma_c \le\pi;\\
			&0, \quad & \text{otherwise},
		\end{aligned}
		\right.
	\end{align*}	
with
$
\Gamma_c = \sqrt{(x-0.5)^2+(y-0.5)^2},\,  \Gamma = 8,\, \omega = 4\pi,
$
and
\[
k(\xi) = 2\cos(\xi) + 2\xi\sin(\xi) + \frac{1}{8}\cos(2\xi) + \frac{\xi}{4}\sin(2\xi)+\frac{3}{4}\xi ^2.
\]
The hydrostatic steady state takes the form
\begin{equation*}
	\rho_0(x,y) = 110 - \Phi,\qquad \theta_0 =1, \qquad
	p_0(x,y) = \frac{1}{2}\left(110 - \Phi\right)^2.
\end{equation*}
Here the computational domain is $[0,2]\times [0,1]$, with periodic boundary conditions. We use this example to demonstrate that the new modeling \eqref{S3_1} and our proposed scheme can have similar good performances as in \cite{huang2022high} for this special case.
We strictly follow our scheme in Section \ref{sec3.4} with a mesh grid $200\times100$ and show the perturbation of density $\rho-\rho_0$ in Fig.~{\ref{Fig2D_vortex_2}}. We can observe that the numerical results are very similar to those obtained in \cite{huang2022high}, which demonstrates the nice performance of the proposed AP method in the low Mach regime. 
\begin{figure}[hbtp]
	\begin{center}
		{\includegraphics[width=5.5cm]{./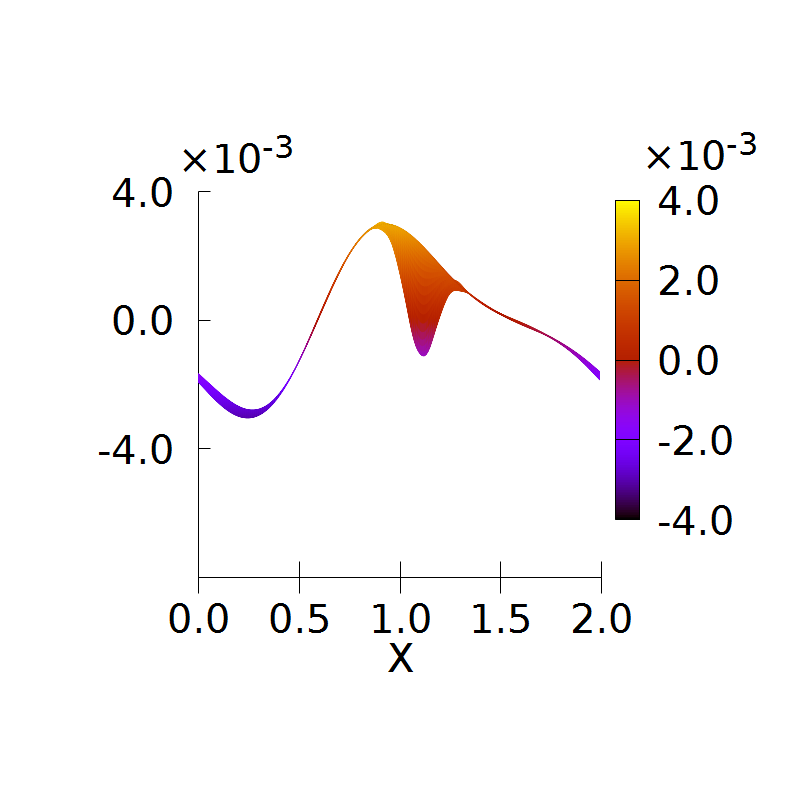}}
		{\includegraphics[width=5.5cm]{./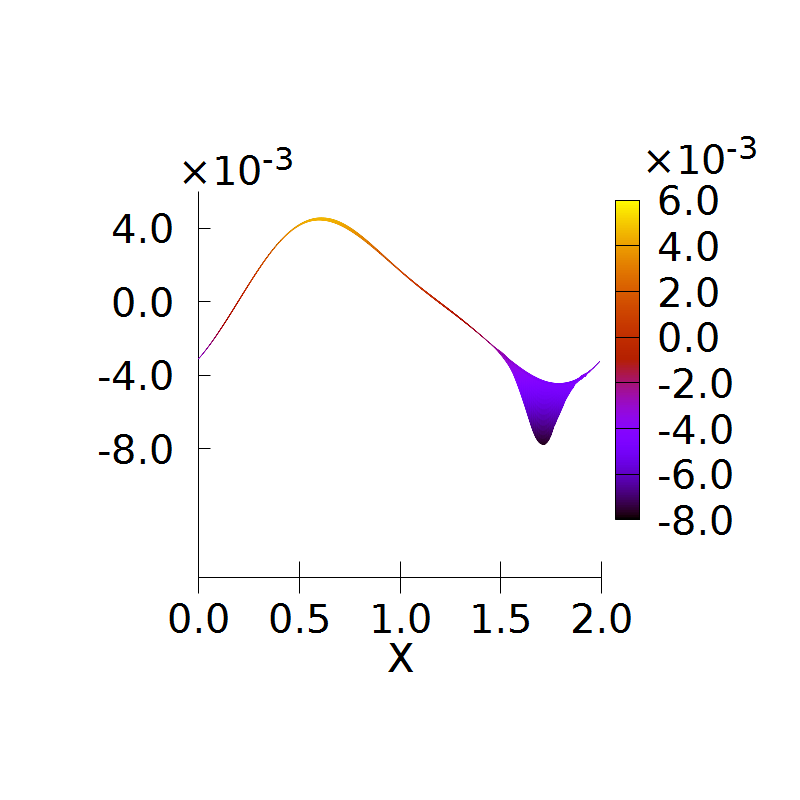}}
		{\includegraphics[width=5.5cm]{./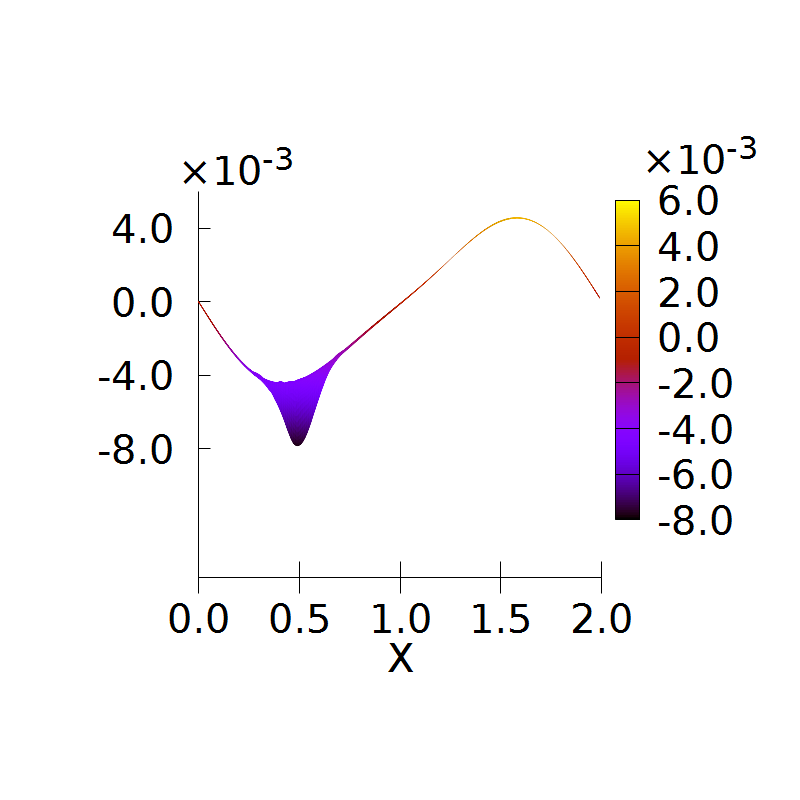}}
		\caption{ Example~\ref{2D_vortex}. Numerical solutions for the perturbation of density $\rho-\rho_0$. Mesh $200\times 100$. From left to right: $t= 0.3,  0.6, 1.0$, respectively. }
		\label{Fig2D_vortex_2}
	\end{center}
\end{figure}

\subsection{2D isothermal equilibrium}
\label{exam5}
This example is designed to test the well-balanced property of a given scheme, as well as its ability to capture small disturbances around an equilibrium state \cite{wu2021uniformly}. We consider an isothermal equilibrium state 
\begin{equation}
	\label{exam5_ini}
	\rho(x,y)  = \bar{\rho}\exp\left(-\frac{\bar{\rho}g}{\bar{p}}(x+y)\right), \,
	\bu = 0,\,
	p(x,y) = \bar{p}\exp\left(-\frac{\bar{\rho}g}{\bar{p}}(x+y)\right),
\end{equation}
with $\bar{\rho} = 1.21$, $\bar{p} = 1$ and $g = 1$. The linear gravitational field with $(\Phi_x, \Phi_y) = (1, 1)$ is taken. The adiabatic index is $\gamma=1.4$ and the global Mach number is set to be $\eps=0.9$. The problem is defined on a unit square $[0,1]^2$. We run the simulation up to the final time $T=1$, on three different meshes $50\times50$, $100\times100$ and $200\times200$. We compare the numerical solutions to the steady-state exact solutions. It was observed that both IMEX and WB-Xing can preserve the solutions up to machine errors, and a well-balanced property is achieved for both schemes. We omit the results here to save space. 		

Next, we add a small perturbation to the pressure 
\begin{equation*}
	p_2(x,y) = \frac{1}{810}\exp\left(-\frac{100\bar{\rho}g}{\bar{p}}((x-0.3)^2+(y-0.3)^2\right)
\end{equation*}
which is a small Gaussian hump centered at $(0.3,0.3)$. 
The other terms are kept to be the same. We take a mesh grid $100\times100$ and compute the numerical solution up to $T=0.15$. Here a transmissive boundary condition \cite{toro2009riemann} is used. The numerical results are shown in Fig.~\ref{Fig5_1}, and again we can observe that both schemes can capture the small perturbations very well in this high Mach regime.


\begin{figure}[hbtp]
	\begin{center}
		{\includegraphics[width=5.5cm]{./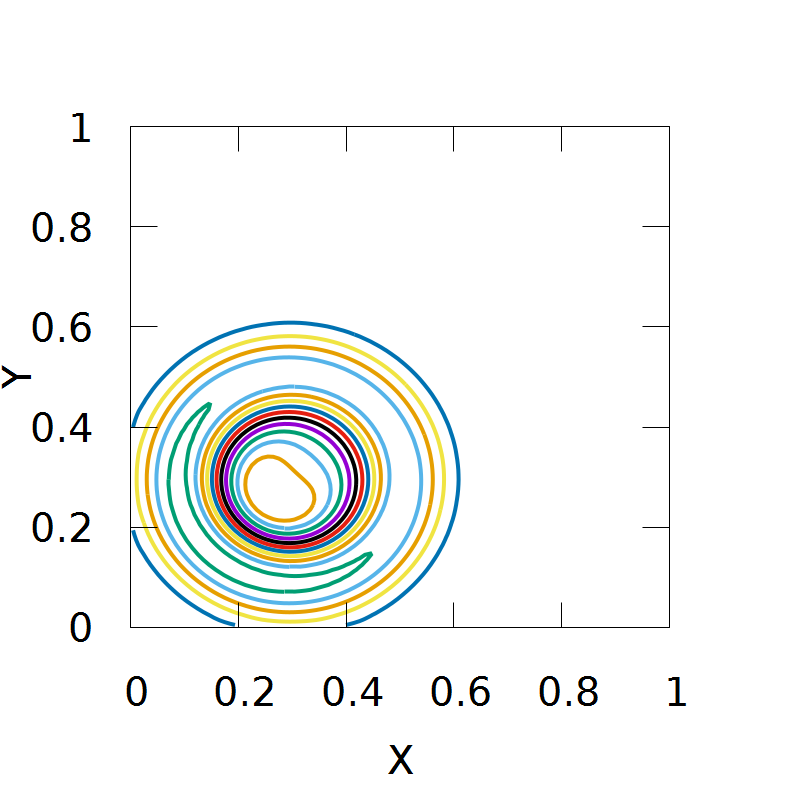}}
		{\includegraphics[width=5.5cm]{./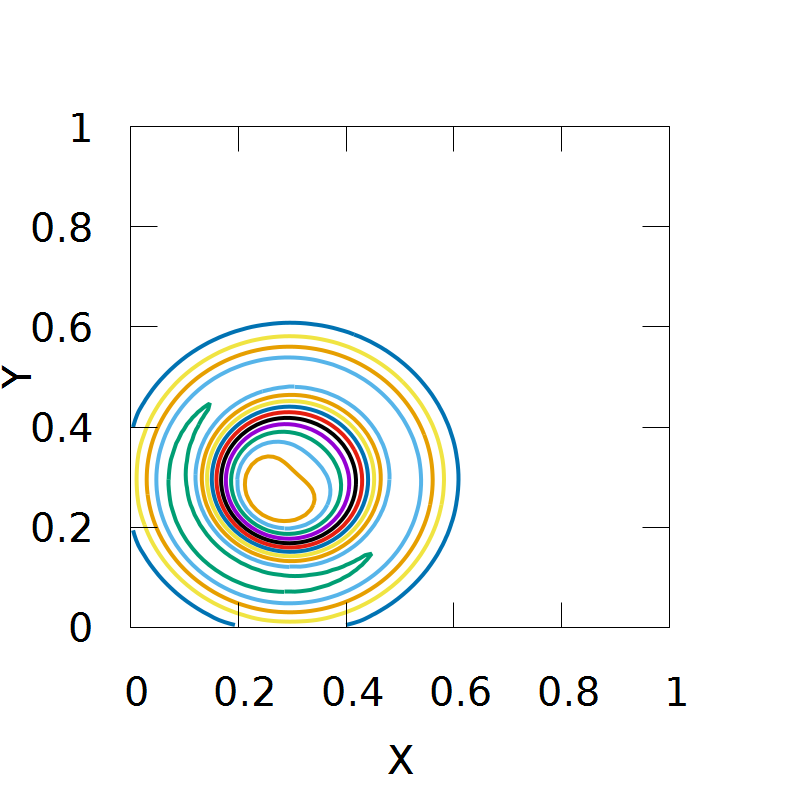}}
		{\includegraphics[width=5.5cm]{./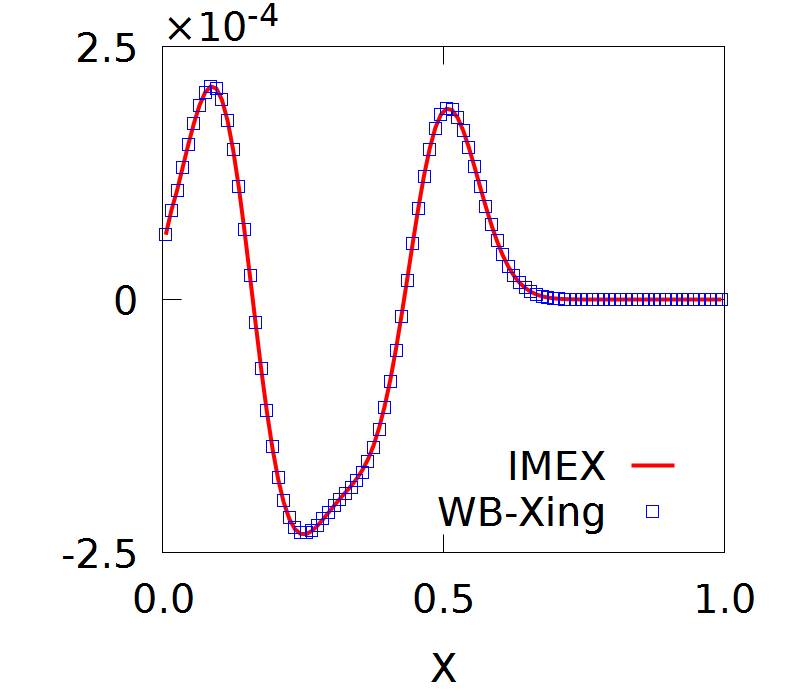}}
		{\includegraphics[width=5.5cm]{./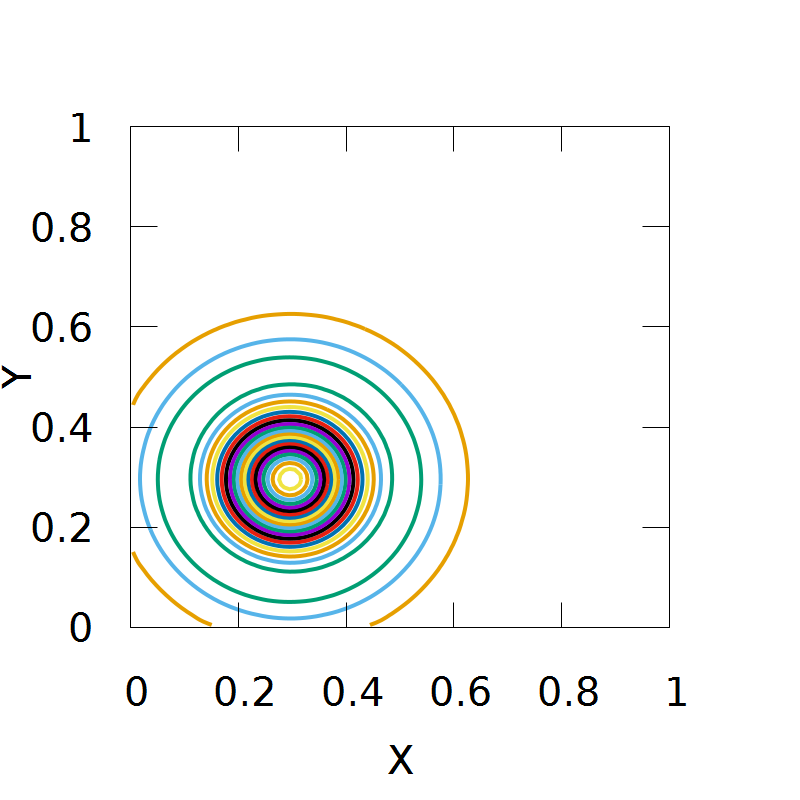}}
		{\includegraphics[width=5.5cm]{./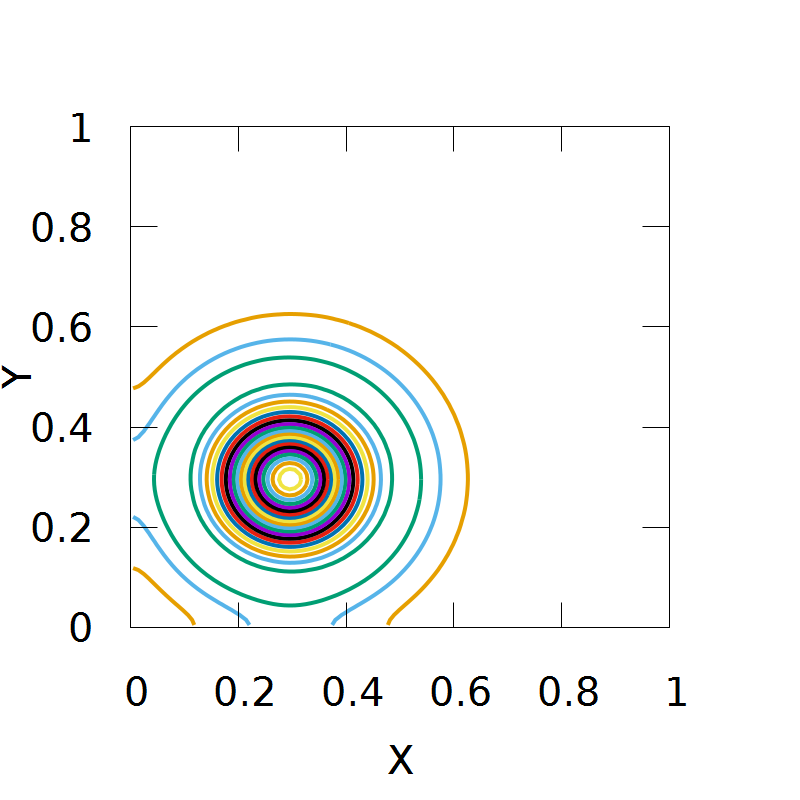}}
		{\includegraphics[width=5.5cm]{./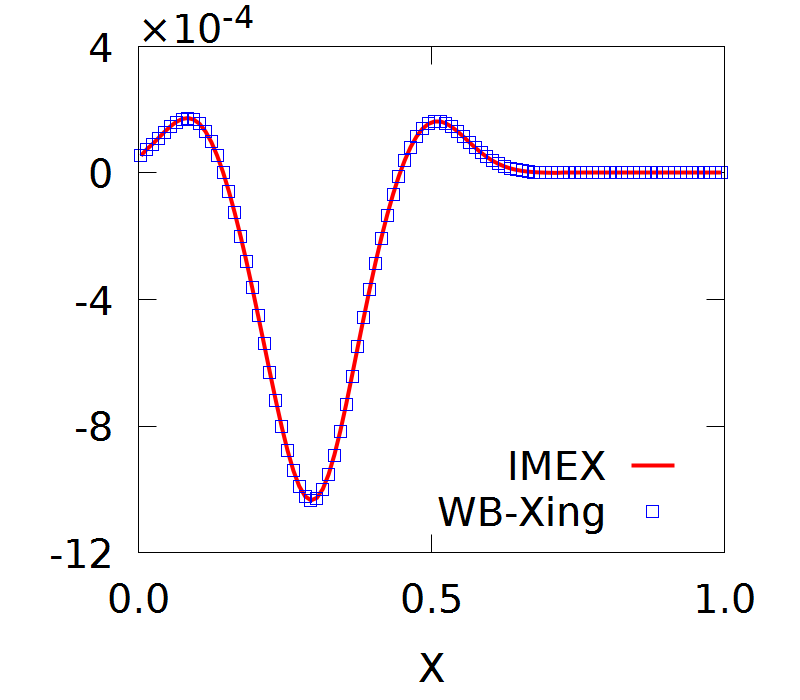}}
		\caption{ Example~\ref{exam5}. Numerical solutions of the pressure perturbation  and the density perturbation at $T=0.15$ with $N\times N=100\times 100$. Left: WB-Xing; middle: IMEX; right: cuts along the line $y=0.305 \, m$. Top: pressure perturbation; bottom: density perturbation.   }
		\label{Fig5_1}
	\end{center}
\end{figure}

\begin{rem}
	{
		For this 2D isothermal equilibrium example, numerical boundary conditions are treated as follows. We split our main variable $U=(\rho,\rho\bu,E,\theta)^T$ into two parts: a hydrostatic one $U^0$ and a perturbation $U'$, that is, $U=U^0+U'$ with
		\begin{equation*}
			U^0 = \left(
			\rho_0,
			{\bf 0},
			\frac{p_0}{\gamma-1},
			\theta_0
			\right)^T,\qquad
			U' = \left(
			\eps^2\rho_2,
			\rho\bu,
			E-\frac{p_0}{\gamma-1},
			\eps^2\theta_2
			\right)^T.
		\end{equation*}
		We apply a transmissive boundary condition \cite{toro2009riemann} to the perturbation $U'$, i.e., ghost values of $U'$ are assigned by mirror symmetry. Ghost values of the hydrostatic component $U^0$ are obtained via extrapolation. The same boundary treatments are also applied to the following two examples.
	}
\end{rem}

\subsection{Rising thermal bubble}
\label{exam6}
This is a benchmark test problem for atmospheric flows, simulating the dynamics of a warm bubble, which has been studied in \cite{ghosh2016well,giraldo2008study,wu2021uniformly}.  Here we consider a linear gravitational field in the vertical direction with $\Phi_x=0$ and $\Phi_y = g$, and the gravitational constant is $g=9.8m/s^2$.
The Exner pressure takes the form
$\Pi =1-(\gamma-1)gy/\gamma/R/\bar{T}$,
with $\bar{T} = 300 \, K$, $ \gamma =1.4,\,$ $R = 287.058 \,\text{J/kg K}$ being the gas constant, and the potential temperature is
\begin{equation*}
	\theta(x,y,0) = \theta_0 + \Delta\theta(x,y,0) = 300\,K +
	\left\{
	\begin{aligned}
		&0,   & r   > r_c;\\
		&\frac{\theta_c}{2}(1+cos(\pi\, r/r_c))   & r  \le r_c.
	\end{aligned}
	\right.
\end{equation*}
Here $r = \sqrt{(x-x_c)^2 + (y-y_c)^2}$, $\theta_c = 0.5 \,K$,  $(x_c,y_c)=(500,350)\,m$ and $r_c = 250\,m$, so that the initial condition can be denoted as
\begin{equation*}
	\rho(x,y,0) = \frac{\bar{p}}{R \theta}\, \Pi^{\frac{1}{\gamma-1}},\quad
	\bu(x,y,0) = (0,0),\quad
	p(x,y,0) = \bar{p} \,\Pi^{\frac{\gamma}{\gamma-1}},
\end{equation*}
with the hydrostatic steady state
\begin{equation*}
	\rho_0(x,y) = \frac{\bar{p}}{R\,\theta_0}\, \Pi^{\frac{1}{\gamma-1}},\qquad
	\theta_0(x,y) = 300 \,K,
	\qquad
	p_0(x,y) = \bar{p}\, \Pi^{\frac{\gamma}{\gamma-1}}.
\end{equation*}
where $\bar{p} = 10^5 N/m^2 $ is a reference pressure at $y=0\,m$, and the computational domain is $[0,1000]\times[0,1000]\, m^2$.

This problem can be rewritten into a dimensionless form by choosing the reference values 
\begin{align*}
	&p_{ref} = 10^5\, N / m^2,\quad
	\rho_{ref} = 10\,kg/m^3, \quad
	t_{ref}   = 10^3 \, s, \quad
	l_{ref}    = 10^3 \, m, \\
	&U_{ref} = \frac{l_{ref}}{t_{ref}}=1\, m/s,\qquad  \theta_{ref} = \frac{p_{ref}}{R\rho_{ref}}=\frac{10^4}{R}\,K,
\end{align*}
so that the global Mach number is
\begin{equation*}
	\eps = \frac{U_{ref}}{\sqrt{p_{ref}/\rho_{ref}}} = 10^{-2}.
\end{equation*}
We run the simulation for a very long time until the final stopping time $t=700s$. 
We show the perturbation of potential temperature $\Delta \theta$ on a mesh gird of  $200\times200$ in Fig. ~{\ref{Fig6_2}}.
Inviscid wall boundary conditions \cite{wu2021uniformly} are used. We can see that both schemes can correctly capture the shear movement of the rising bubble, which is along the opposite direction of gravity, and eventually, it forms a shape of a mushroom cloud.
For this example with smooth solutions, we only use component-wise WENO reconstruction with linear weights for saving computational cost. However, comparing the WB-Xing scheme with our IMEX scheme, our scheme is much less diffusive on the same mesh grid, as the explicit WB-Xing scheme has a numerical viscosity inversely proportional to the Mach number $\eps$, while our IMEX scheme does not. We also compare the CPU cost of two schemes in Table~{\ref{table_2D_test3_1}}, from which we can observe that our IMEX scheme is much more efficient for this low Mach problem.


\begin{figure}[hbtp]
	\begin{center}
		{\includegraphics[width=5.5cm]{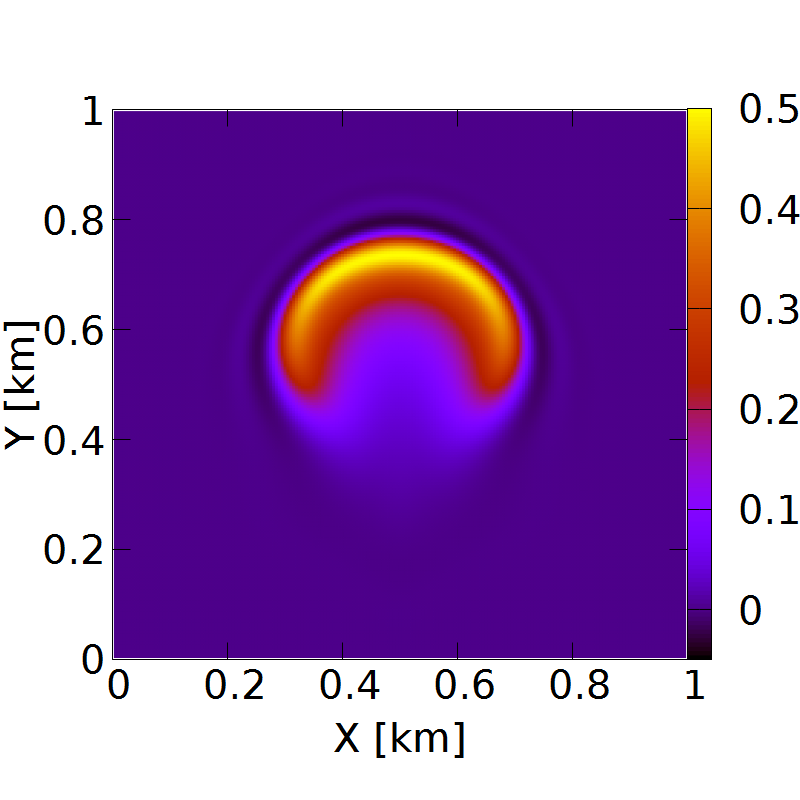}}
		{\includegraphics[width=5.5cm]{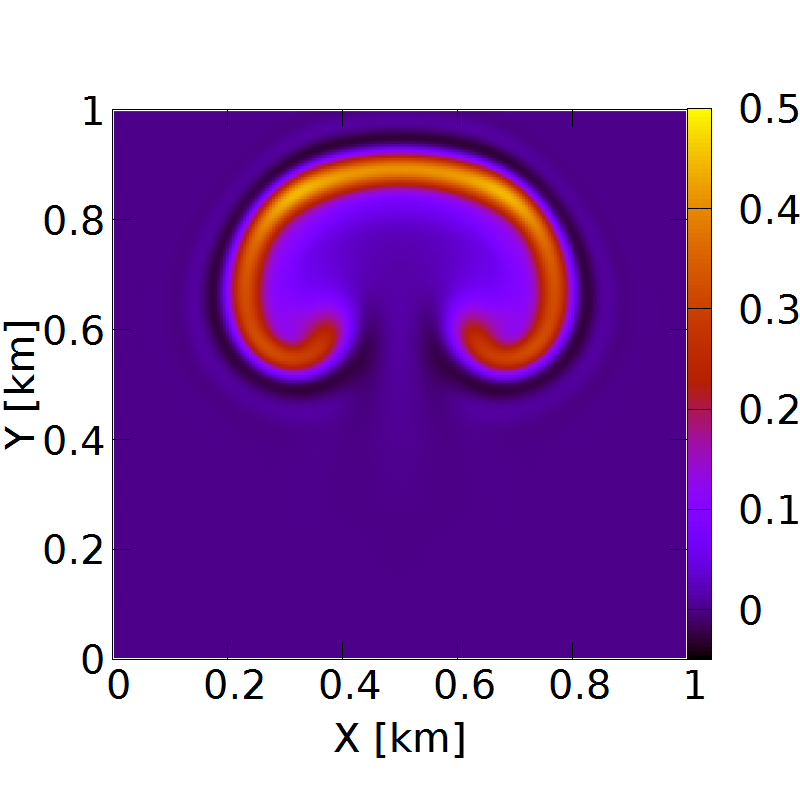}}
		{\includegraphics[width=5.5cm]{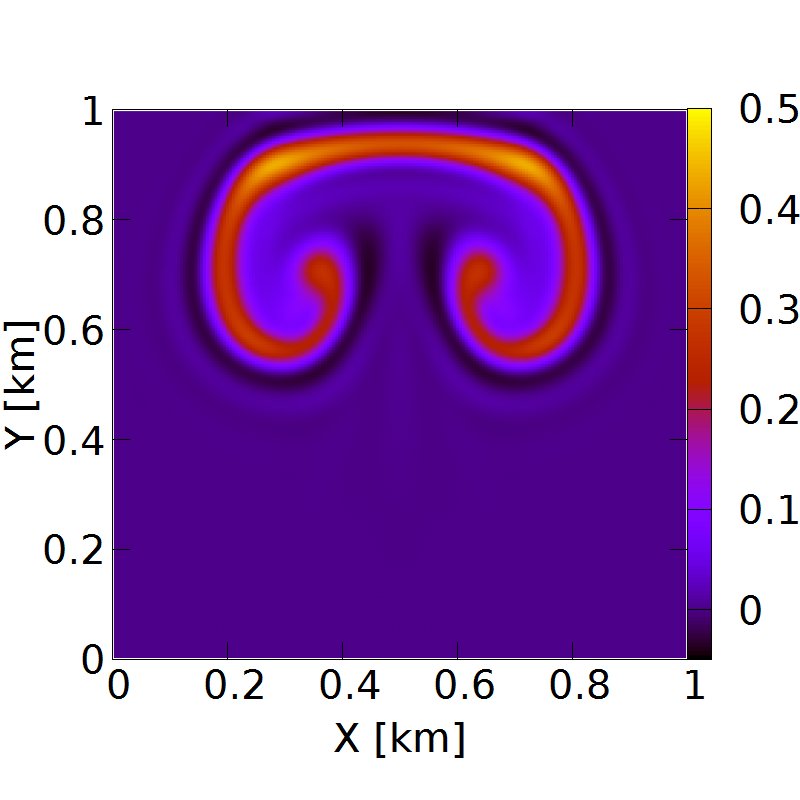}}
		{\includegraphics[width=5.5cm]{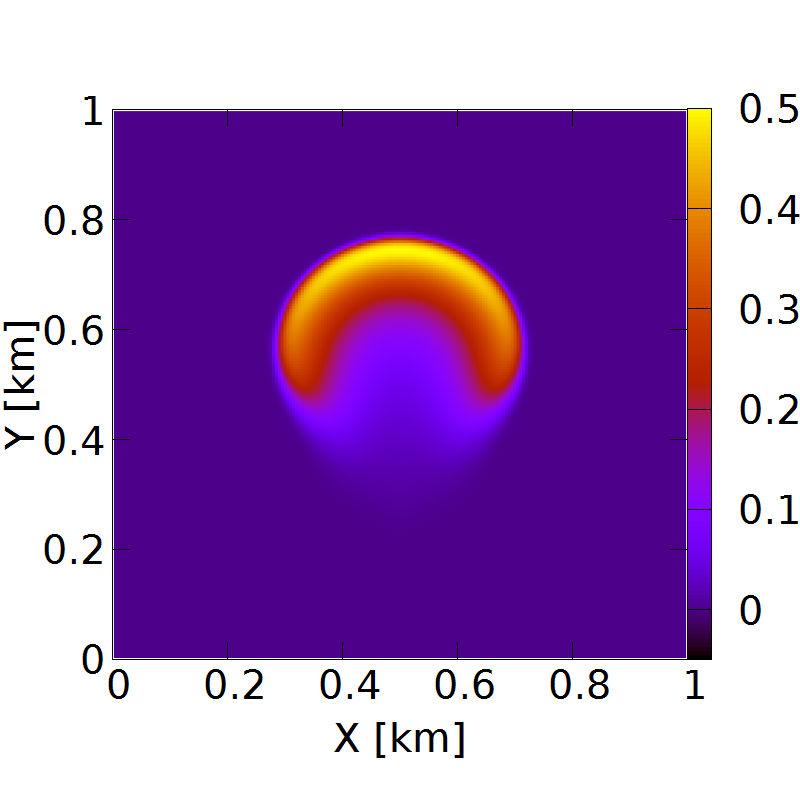}}
		{\includegraphics[width=5.5cm]{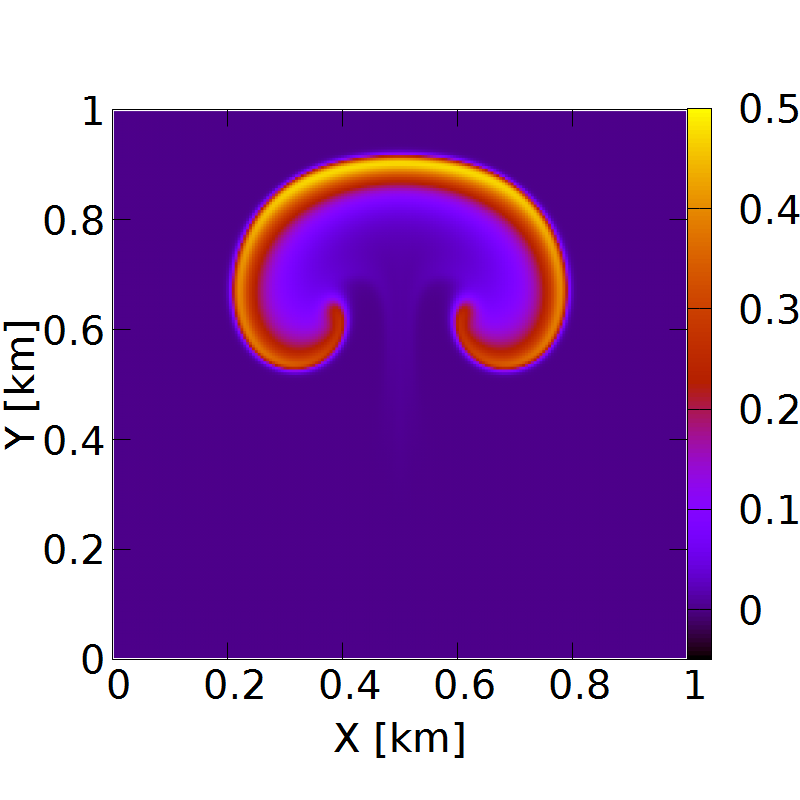}}
		{\includegraphics[width=5.5cm]{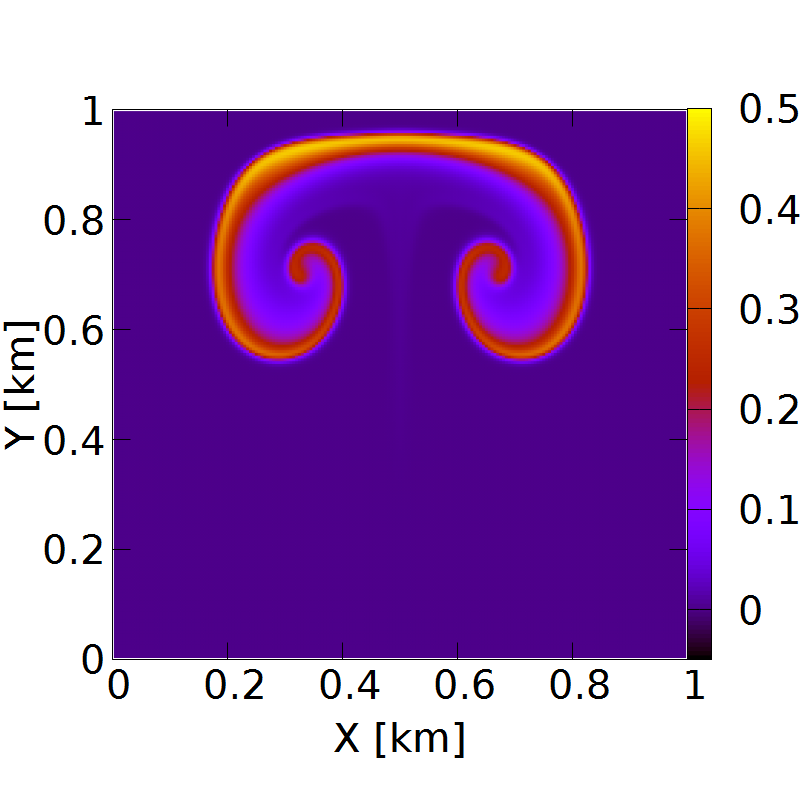}}
		\caption{ Example~\ref{exam6}. Numerical results for the perturbation of potential temperature $\Delta \theta$. From left to right, $t=400s, 600s, 700s$, respecively. Top: WB-Xing; Bottom: IMEX. Mesh: $200\times 200$.}
		\label{Fig6_2}
	\end{center}
\end{figure}


\subsection{Inertia-gravity wave}
\label{exam2D_test3}
This test arises from atmospheric flows, and has also been studied in \cite{ghosh2016well,giraldo2008study,wu2021uniformly}.
The computational domain is set as $[0,300000]\times[0,10000]\,m^2$, with a periodic boundary condition in the $x$ direction and an inviscid wall boundary condition in the $y$ direction.
The linear gravitational field with $(\Phi_x,\Phi_y) = (0,g)$ and $g=9.8\,m/s^2$ is considered.
The Exner pressure is
\begin{equation*}
	\Pi =1 +
	\frac{(\gamma - 1)g^2}{\gamma RT_0\mathscr{N}^2}
	\biggl[
	\exp
	\biggl(-
	\frac{\mathscr{N}^2}{g}
	y
	\biggr)-
	1
	\biggr],
\end{equation*}
and the initial potential temperature is denoted as 
\begin{equation*}
	\theta(x,y,0) = \theta_0(y) + \Delta\theta(x,y,0),
\end{equation*}
where 
\begin{equation*}
	\theta_0(y) = \bar{T} \exp
	\biggl(\frac{\mathscr{N}}{g}y \biggr)
	\,K,
	\quad
	\Delta \theta(x,y,0) =
	\theta_c\sin\left(\frac{\pi y}{h_c}\right)
	\biggl[1+(x-x_c)^2/a_c^2\biggr]^{-1},
\end{equation*}
with the Brunt--V\"ais\"al\"a frequency $\mathscr{N}  = 0.01/s$,
$\bar{T} = 300 \, K$, $\theta_c = 0.01\,K$, $h_c = 10000\,m$, $x_c = 100000\,m$ and $a_c = 5000\,m $.
The initial conditions are given by
\begin{equation*}
	\rho(x,y,0) = \frac{\bar{p}}{R \theta}\, \Pi^{\frac{1}{\gamma-1}},\quad
	\bu(x,y,0) = (20,0) \, m/s,\quad
	p(x,y,0) = \bar{p} \,\Pi^{\frac{\gamma}{\gamma-1}},
\end{equation*}
and the equilibrium state is
\begin{equation*}
	p_0 = \bar{p}\, \Pi^{\frac{\gamma}{\gamma-1}},\qquad
	\theta_0 = \bar{T} \exp
	\biggl(\frac{\mathscr{N}}{g}y \biggr)
	\,K,
	\qquad
	\rho_0 = \frac{\bar{p}}{R \theta_0} \Pi^{\frac{1}{\gamma-1}},
\end{equation*}
with $\bar{p}=10^5\, N/m^2 $.
As in Example~\ref{exam6}, we consider the problem in a dimensionless form, 
by choosing the following reference values
\begin{align*}
	&p_{ref} = 10^5 N / m^2,\quad
	\rho_{ref} = 10^{-1}kg/m^3, \quad
	t_{ref}    = 10^5m/s^2,\quad
	l_{ref}    = 10^5 m, \\
	&U_{ref} = \frac{l_{ref}}{t_{ref}}=1\, m/s,\qquad  \theta_{ref} = \frac{p_{ref}}{R\rho_{ref}}=\frac{10^6}{R}\,K,
\end{align*}
and the corresponding global Mach number is 
\begin{equation*}
	\eps = \frac{U_{ref}}{\sqrt{p_{ref}/\rho_{ref}}} = 10^{-3}.
\end{equation*}
In this example, for the WB-Xing scheme, an HLLC flux which is less diffusive than the Lax-Friedrichs flux, is used in the simulation. We run the solution up to $t=3000s$ for both schemes. For this example, we take CFL=$0.01$ for our IMEX scheme, and CFL=$0.005$ for the WB-Xing scheme.
We show the perturbation of potential temperature $\Delta \theta$ on the mesh grid $400\times50$ and $800\times50$ in Fig.~{\ref{Fig2D_test3_2}} and Fig.~{\ref{Fig2D_test3_1}}. From these numerical solutions, especially the cuts in Fig.~{\ref{Fig2D_test3_1}}, we can see the IMEX scheme has better resolution than the WB-Xing scheme on the coarse grid. We also compare the CPU cost of two schemes in Table~{\ref{table_2D_test3_1}}, and observe that the IMEX scheme is much more efficient in this low Mach regime.

\begin{figure}[htpb]
	\begin{center}			{\includegraphics[width=8cm]{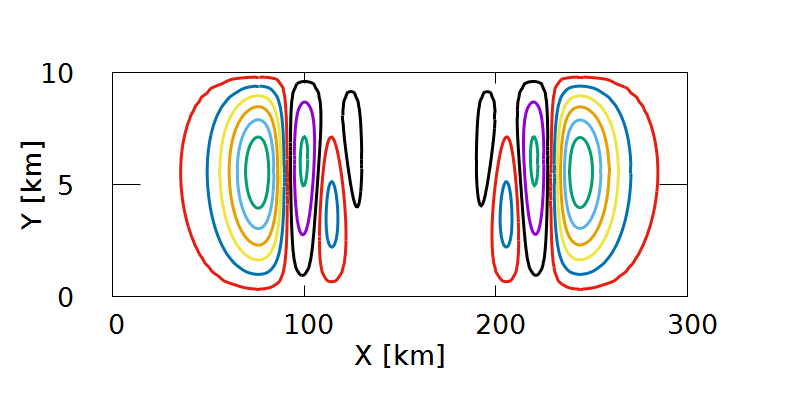}}				{\includegraphics[width=8cm]{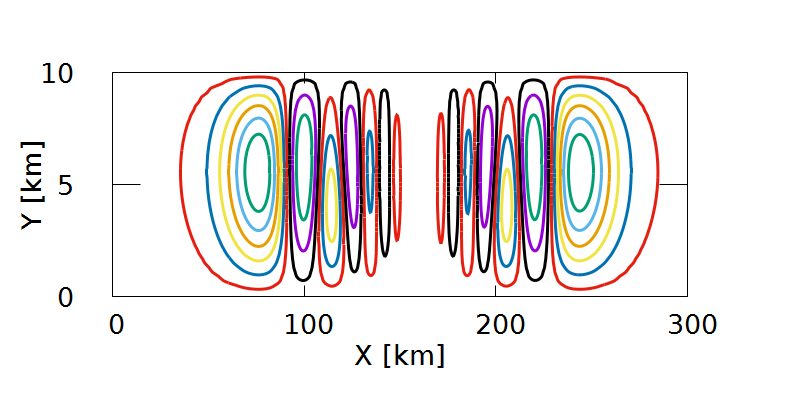}}
		{\includegraphics[width=8cm]{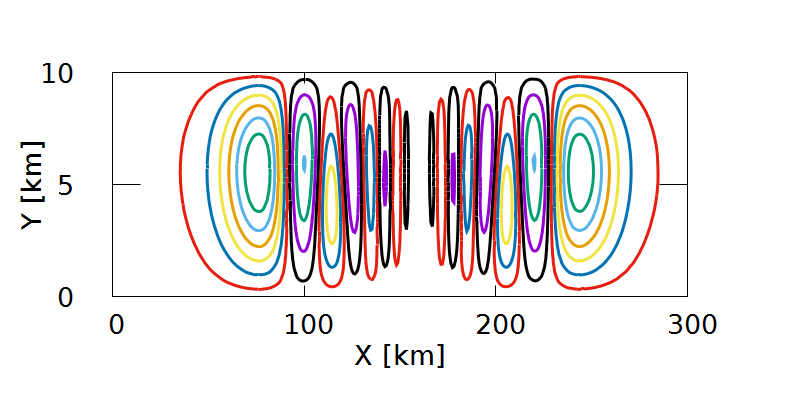}}				{\includegraphics[width=8cm]{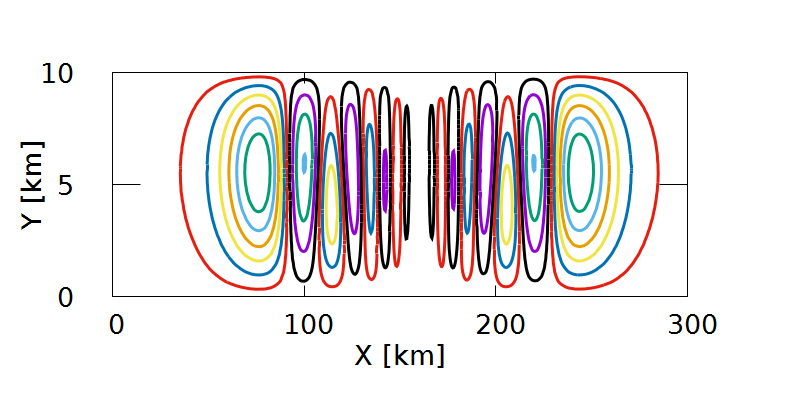}}
		\caption{  Example~\ref{exam2D_test3}. Numerical results for  the perturbation of potential temperature $\Delta \theta$ at $t=3000s$. Mesh grid: $400\times 50$ (left); $800\times 50$ (right). Top: WB-Xing; bottom: IMEX. }
		\label{Fig2D_test3_2}
	\end{center}
\end{figure}

\begin{figure}[htpb]
	\begin{center}
		{\includegraphics[width=8cm]{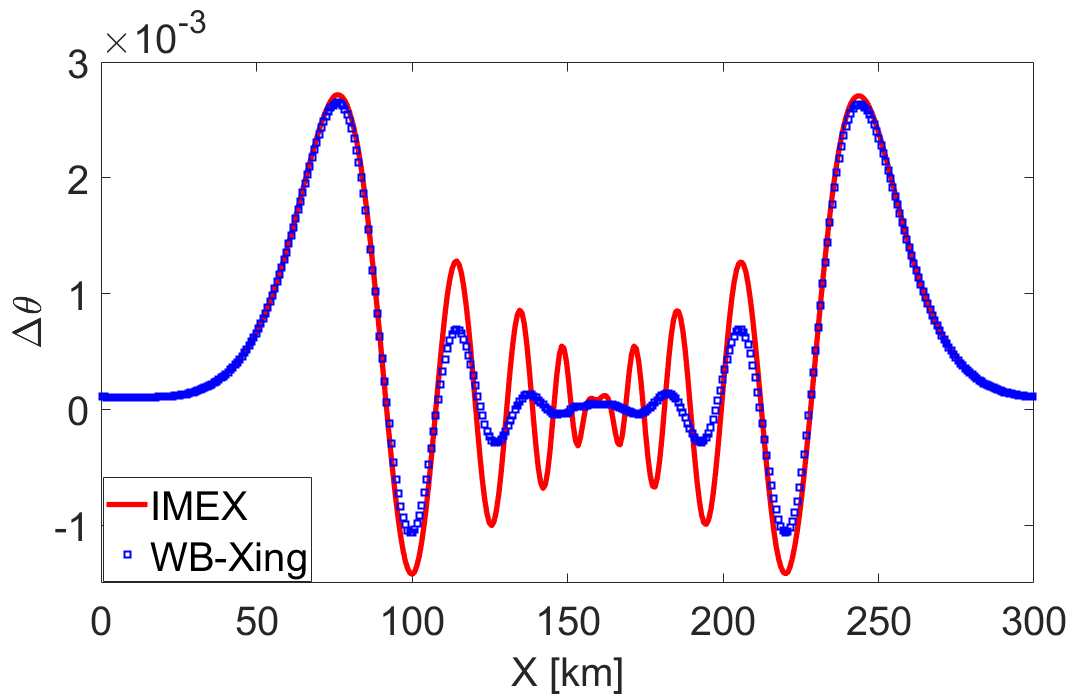}}
		{\includegraphics[width=8cm]{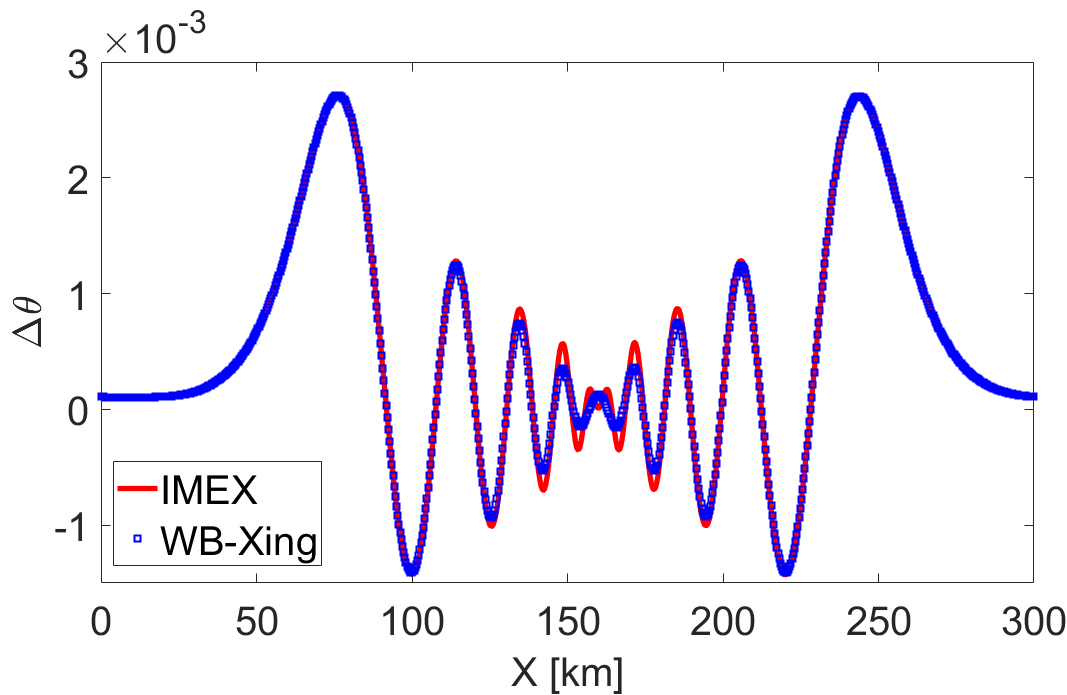}}
		\caption{ Example~\ref{exam2D_test3}. Numerical results for  the perturbation of potential temperature $\Delta \theta$ along the line $y=4900\,m$ at $t=3000s$. Mesh grid: $400\times 50$ (left); $800\times 50$ (right).}
		\label{Fig2D_test3_1}
	\end{center}
\end{figure}

\renewcommand{\multirowsetup}{\centering}
\begin{table}[htbp]
	\caption{ CPU cost (seconds) for the WB-Xing and IMEX schemes.}
	\begin{center}
		\begin{tabular}{c|c|c|c}
			\hline
			& $N_x \times N_y$     & WB-Xing& IMEX   \\ \hline
			Example~\ref{exam6} &
			$100\times 100$  &  7,328.77  &     1,354.02  \\ 
			$\eps=10^{-2}$ &
			$200\times 200$  &  55,719.46 &     11,712.07  \\\hline
			
			Example~\ref{exam2D_test3}&
			$400\times 50$  & 300,512.70         &     12,578.01 \\
			$\eps=10^{-3}$  &
			$800\times 50$  & 585,356.01        &     32,064.40 \\ \hline		
		\end{tabular}
	\end{center}
	\label{table_2D_test3_1}
\end{table}

\section{Conclusion}
\label{sec6}
\setcounter{equation}{0}
\setcounter{figure}{0}
\setcounter{table}{0}

In this work, we designed a high order semi-implicit AP well-balanced finite difference WENO scheme for the all Mach full Euler system with a gravitational field. It is much more challenging than the shallow water equations with a non-flat bottom \cite{huang2022high}, due to the existence of gravity which couples all equations together in the limit. We proposed to add the evolution for the perturbation of potential temperature in the design of our scheme, which provides a general and easy framework for the development of AP schemes to ensure the correct incompressible limit. The AP and AA properties are formally analyzed. Numerical tests on both 1D and 2D problems have demonstrated the high order accuracy, well-balanced, AP and AA properties of our proposed scheme. Our IMEX scheme has also been shown to yield better resolution and higher efficiency in the low Mach regime when compared with the explicit scheme.

\bibliographystyle{abbrv}
\bibliography{refer}

\end{document}